\documentclass[a4paper,12pt]{amsart}

\usepackage{amssymb,amsbsy,amsmath,amsfonts,amssymb,amscd}
\usepackage{latexsym}
\usepackage{graphics}
\usepackage{color}
\usepackage{comment}
\input xy
\xyoption{all}

\theoremstyle{plain}
\newtheorem{thm}{Theorem}[section]

\newtheorem{lemma}[thm]{Lemma}

\theoremstyle{definition}
\newtheorem{remark}[thm]{Remark}

\newtheorem{defin}[thm]{Definition}

\newtheorem{example}[thm]{Example}

\numberwithin{equation}{section}

\newcommand{\sG}{{\mathcal G}}

\newcommand{\sT}{{\mathcal T}}


\newcommand{\PP}{\ensuremath{\mathbb{P}}}

\newcommand{\CC}{\ensuremath{\mathbb{C}}}
\newcommand{\RR}{\ensuremath{\mathbb{R}}}
\newcommand{\ZZ}{\ensuremath{\mathbb{Z}}}
\newcommand{\QQ}{\ensuremath{\mathbb{Q}}}

\newcommand{\hol}{\ensuremath{\mathcal{O}}}


\newcommand\la{\lambda}

\newcommand\ze{\zeta}
\newcommand\al{\alpha}
\newcommand\be{\beta}

\newcommand\ga{\gamma}
\newcommand\de{\delta}
\newcommand\e{\epsilon}

\newcommand{\Lam}{\Lambda}

\newcommand{\FF}{\ensuremath{\mathbb{F}}}

\newcommand{\ra}{\ensuremath{\rightarrow}}

\def\eea{\end{eqnarray*}}
\def\bea{\begin{eqnarray*}}

\newcommand\dual{\mathrel{\raise3pt\hbox{$\underline{\mathrm{\thinspace d
\thinspace}}$}}}
\newcommand\qe{\ifhmode\unskip\nobreak\fi\quad $\Box$}       

\def\BOX{\hfill\lower.5\baselineskip\hbox{$\Box$}}

\newtheorem{theo}{Theorem}[section]

\newtheorem{remarkk}[theo]{Remark}

\newtheorem{prop}[theo] {Proposition}
\newtheorem{cor}[theo]{Corollary}

\newenvironment{ex}{\begin{example}\rm}{\end{example}}

\setlength{\parindent}{0pt} 

\def\verde{\color[rgb]{.2,.5,.1}}

%
\usepackage{hyperref}
%
%




\title [Cyclic Symmetry - BDF Manifolds]{Cyclic Symmetry on  Complex Tori and Bagnera-De Franchis Manifolds}

\author{Fabrizio Catanese}
\address {Lehrstuhl Mathematik VIII\\
Mathematisches Institut der Universit\"at Bayreuth\\
NW II,  Universit\"atsstr. 30\\
95447 Bayreuth}
\email{Fabrizio.Catanese@uni-bayreuth.de}

\thanks{AMS Classification: 14K99, 14D99, 32Q15, 32M17, 32Q57,  11A07, 11R18, 13C05.\\
The present work took place in the framework of the 
 ERC Advanced grant n. 340258, `TADMICAMT' }

\date{\today}

\begin{document}

\maketitle

\begin{abstract}
We describe the possible linear actions of a cyclic group $G = \ZZ/n$ on a complex torus, using the cyclotomic
exact sequence for the group algebra $\ZZ[G]$. The main application is devoted to
a structure theorem for Bagnera-De Franchis Manifolds, but we also give an application to hypergeometric integrals.


\end{abstract}

\addtocontents{toc}{\protect\setcounter{tocdepth}{1}}

\tableofcontents
\newpage

\section*{Introduction}

Classically, the word 'hyperelliptic' was used for two different ways of generalizing the class of elliptic curves, i.e. the complex tori of dimension 1.

Hyperelliptic curves are defined to be the curves who admit a map to $\PP^1$ of degree 2, and are not Hyperelliptic Varieties according to
the definition of  the French school of Appell, Humbert, Picard, Poincar\'e. 

The French school     defined the Hyperelliptic Varieties as those smooth projective varieties whose universal covering is
biholomorphic to $\CC^g$ (in particular the Abelian varieties are in this class). For $g=1$ these are just the elliptic curves, whereas a prize, the  Bordin prize, was offered for those mathematicians who would achieve the classification of the Hyperelliptic varieties of dimension  $2$.
 Enriques  and Severi were awarded the Prize in 1907 (\cite{es}), but they withdrew their first paper after discussion with De Franchis (replacing it with a second one); 
 Bagnera and De Franchis were awarded the Bordin Prize in 1909, they gave a simpler proof  (\cite{bdf}) apart of a small gap;  perhaps for this reason 
we prefer to call Bagnera-De Franchis surfaces the Hyperelliptic surfaces which are not Abelian surfaces.

 Kodaira \cite{kod} showed that if we
take the wider class of compact complex manifolds of dimension $2$ whose universal covering is
$\CC^2$, then there are other non algebraic and non K\"ahler surfaces, called nowadays Kodaira surfaces.

Based on Kodaira's work, Iitaka conjectured that if a compact  K\"ahler Manifold $X$ has universal covering 
biholomorphic to $\CC^g$, then necessarily  $X$ is a quotient $ X = T / G$ of a complex torus $T$ by the free action of a finite group $G$ (which we may assume to contain no translations).

The conjecture by Iitaka had been proven in dimension $2$ by Kodaira, and was much later proven  in dimension $3$ by Campana and Zhang \cite{cz}.
Whereas it was shown in \cite{chk} that,  if the abundance conjecture holds, then a   projective smooth variety $X$ with universal covering $\CC^n$ is a Hyperelliptic variety according to the following definition.

\begin{defin}
A Hyperelliptic Manifold $X$  is defined to be a quotient $ X = T / G$ of a complex torus $T$ by the free action of a finite group $G$ which contains no translations. 

We say that $X$ is a  Hyperelliptic Variety if moreover the torus $T$ is projective, i.e., it is an Abelian variety $A$.
\end{defin}

 If the group $G$ is a cyclic group $\ZZ/n$, then such a quotient is called (\cite{bcf}, \cite{topmethods}) a Bagnera-De Franchis manifold
(in dimension $g=2$, $G$ is necessarily cyclic, whereas in dimension $ g \geq 3$ the only examples with $G$ non Abelian have 
$G = D_4$ and were classified in \cite{cd} (for us $D_4$ is the dihedral group  of order $8$).

Indeed, (see for instance \cite{ccd}) every Hyperelliptic Manifold is a deformation of a Hyperelliptic Variety, so that
a posteriori the two notions are related to each other, in particular  the set of underlying differentiable manifolds is the same.

By the so-called Bieberbach's third theorem \cite{bieb1,bieb2} concerning the finiteness of Euclidean cristallographic groups,
Hyperelliptic manifolds of a fixed dimension $g$ belong to a finite number of families.

We give in this paper an explicit boundedness result for the case of  Bagnera-De Franchis Manifolds, for which $G$ is a finite cyclic group
(Theorem \ref{BdF}).

The theory of Bagnera-De Franchis Manifolds was introduced in \cite{bcf}, and also expounded in \cite{topmethods} (following ideas introduced
 first in \cite{catcil}) and our main motivation here was to
expand and improve the presentation given there, whereas we refer the reader to \cite{dem} for a classification of 
Bagnera-De Franchis Manifolds of low dimension.
   
 In order to do so, it is necessary to treat linear actions of a finite group $G$ on a complex torus, and we do this here for
 the case of a cyclic group. The subtle point is to describe these actions not only up to isogeny, but determining explicit 
 the torsion subgroups involved in these isogenies.
 
 The starting  point is that a linear action of a group $G$ on a complex torus consists in two steps:
 
 1) viewing $\Lam : = H_1(T, \ZZ)$ as a $\ZZ[G]$-module
 
 2) choosing an appropriate Hodge decomposition on $\Lam \otimes \CC : = \Lam \otimes_{\ZZ} \CC$
  $$\Lam \otimes \CC = H^{1,0} \oplus \overline{H^{1,0} }$$ which
 is invariant for the group action (i.e., $H^{1,0}$ is a $G$-invariant subspace). 
 
 While 2) uses, for $G$ cyclic, just the eigenspace decomposition,  1) requires us to explain in detail some  elementary
 and mostly well known facts about the group algebra of a cyclic group $G$, $\ZZ[G] = \ZZ[x] / (x^n-1)$.
 
 This is derived in section 3 from an elementary generalization of the Chinese remainder theorem (Theorem \ref{Factorial}),
  concerning quotients of factorial rings by principal ideals (these   appear naturally
 in the intersection theory of divisors), and from some classical results about resultants of cyclotomic polynomials,
 explained in section 2. The interesting result for our purposes is Proposition \ref{direct sum}.
 
 The first application that we give is related to hypergeometric integrals: we calculate explicitly 
 the homology of a cyclic covering of degree $n$ of  the projective line $\PP^1$ as a $\ZZ[\ZZ/n]$-module, under the assumption that there
 is a point of full ramification. The description is particularly nice for the case where there  are two points of full ramification,
 Theorem \ref{2totram}   shows that we have   a direct sum of certain cyclic modules naturally associated to the ramification indices. 
 
 \newpage
 
 {\bf Theorem 4.3}{\em
 
Assume that we have a cyclic covering $ f : C \ra \PP^1$ with group $\ZZ/n$ and with two points of full ramification.

Then, if the order of the inertia groups are $n,r_1, \dots , r_k, n$,  the $\ZZ[x] $-module  $H_1(C, \ZZ)$ is a direct sum of
cyclic  modules, 
$$H_1(C, \ZZ) = \oplus_1^k   \ZZ[x] / ( 1 + x^{n/r_j}  + x^{2n/r_j} + \dots + x^{(r_j-1)n/ r_j}) .$$
}

In the final section 6 we determine also explicitly  the intersection product for the first homology group $H_1(C, \ZZ)$.

  We pose the question of finding
a simple description  in the general case.
 
 \medskip
 
 Section 5 is devoted to the second  and main application, namely, the algebraic description of Bagnera-De Franchis Manifolds
 with group $\ZZ/n$.
 
 The main result is Theorem \ref{BdF}.

{\bf Theorem 5.1}{\em

A Bagnera-De Franchis Manifold with group $G = \ZZ/n$  is completely determined by the following data:

\begin{enumerate}
\item
 the datum of 
torsion free $R_d$-modules $\Lam_d$ of finite rank, for all $d | n$, such that  $\Lam_1 \neq 0$ and with $\Lam_1, \Lam_2$ of even rank;
\item
the datum of a finite subgroup $\Lam^0 \subset A' : =   \oplus _{d|n} A_d$, where $A_d : = \Lam_{d, \RR}/ \Lam_d$,
\item
an element $\be_1 \in A_1$  generating a subgroup $\langle \be_1 \rangle$ of order exactly $n$,
such that:
\item
(A) $\Lam^0$ is stable for multiplication by the element $x$ of the subring $R(n) \subset R'(n) : =  \oplus _{d|n} R_d $,
and
\item
(B) $\Lam^0 \cap A_d = 0$ $\forall d|n$,
\item
(C) the projection of $\Lam^0$ into $A_1$   intersects  the subgroup $\langle \be_1 \rangle$ only in $0$;
\item
the datum of a complex structure on each $\Lam_d \otimes \CC $,  i.e., a Hodge decomposition
$$ \Lam_d \otimes \CC  = V(d) \oplus \overline{V(d)}, $$
which allows to decompose $V(d) = \oplus_{j < d, (j.d)=1} V_j$ as a direct sum of eigenspaces for the action $\al$ of $x$.
\item
The properties (A) and (B) imply  that $\Lam^0 \subset \oplus_{d | n}  ( \frac{\Phi_d}{Q_n} \Lam_d ) / \Lam_d $,
hence, in particular,  the number of such subgroups $\Lam^0$ is finite.

\end{enumerate}

}


\bigskip

 Throughout the paper we have been trying to illustrate the concepts introduced, or discussed, via many concrete examples.

\section{An exact sequence in factorial rings}

\begin{thm}\label{Factorial}
	Let  $\hol$ be a factorial ring, and assume that we have an integer $k \geq 2$ and elements $f_1, \dots, f_k \in \hol$, such that
	
	(1) $f_i$ is not a unit 
	
	(2) for $i \neq j$, $f_i$ and $f_j$ are relatively prime.
	
	Then we have a natural exact sequence 
	
	$$ 0 \ra R: = \hol / (f_1 f_2 \dots f_k)  \ra \oplus_1^k \hol / (f_i)  \ra \oplus_{i <  j} \hol / (f_i, f_j)  \ra 0,$$
	
	where, setting $R_i : = \hol / (f_i) $, $R_{i,j} : =  \hol / (f_i, f_j) $ for $i \neq j$,
	
	(3) $ R \ra \oplus_1^k R_i $ is induced by the natural surjections $ R \ra R_i  = \hol / (f_i) $, and where 
	
	(4) $(a_i ) \in  \oplus_1^k R_i \mapsto (a_i - a_j) \in \oplus_{i  <  j} \hol / (f_i, f_j) .$
\end{thm}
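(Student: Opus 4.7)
The sequence requires proof in four places: injectivity on the left, vanishing of the composition, exactness in the middle, and surjectivity on the right. The first two are immediate. If $a \in \hol$ represents an element of $R$ that vanishes in every $R_i$, then each $f_i$ divides $a$; since $\hol$ is factorial and the $f_i$ are pairwise coprime, their irreducible factorizations are disjoint, so $f_1 \cdots f_k \mid a$ and $a$ represents $0 \in R$. Vanishing of the composition is obvious since $a - a = 0$ in every $R_{i,j}$.

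For middle exactness I would proceed by induction on $k$. The base case $k = 2$ is a direct CRT-with-remainder: given $a_1 \in R_1$, $a_2 \in R_2$ whose difference lies in $(f_1, f_2)$, write $a_1 - a_2 = f_1 u + f_2 v$ in $\hol$ and set $a := a_1 - f_1 u = a_2 + f_2 v$; then $a$ reduces to $a_i$ modulo $f_i$, and by the injectivity already proved its class in $R = \hol/(f_1 f_2)$ is uniquely determined. For $k \geq 3$, set $g := f_1 \cdots f_{k-1}$; then $g$ and $f_k$ are coprime in $\hol$ (pairwise coprimality ascends to products in a UFD), the two-element case applies to the pair $(g, f_k)$, and combining this with the inductive hypothesis for the $(k-1)$-tuple $(f_1, \dots, f_{k-1})$ through a snake-lemma-type diagram chase should yield middle exactness at stage $k$.

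Surjectivity of the second map is the main obstacle. The case $k = 2$ is trivial, since any $b \in R_{1,2}$ lifts to $\tilde b \in \hol$ and $(\tilde b, 0) \in R_1 \oplus R_2$ maps to $\tilde b - 0 \equiv b$. For $k \geq 3$, one hopes to argue by induction: realize the sub-tuple $(b_{ij})_{i<j<k}$ by some $(a_1, \dots, a_{k-1})$ from the inductive hypothesis, and then lift and adjust $a_k \in R_k$ so that each of the constraints $a_i - a_k \equiv b_{i,k}$ modulo $(f_i, f_k)$ is simultaneously satisfied. The delicate point — and where the UFD hypothesis and pairwise coprimality of the $f_i$ must be brought to bear in earnest — is ensuring that the joint lift of $a_k$ always exists, i.e. that no hidden compatibility at triple intersections $\hol/(f_i, f_j, f_k)$ obstructs the lifting. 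I expect the real content of the proof to lie in this step: a careful ideal-theoretic calculation using coprimality to show that the pairwise conditions already imply whatever higher-order compatibilities are needed for the simultaneous lift.
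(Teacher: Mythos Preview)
Your instinct about triple-intersection obstructions is exactly right, and it is not a technicality to be dispatched: it is a genuine obstruction that makes the surjectivity claim false as stated. Take $\hol = \ZZ[x]$ with $f_1 = x-1$, $f_2 = x+1$, $f_3 = x^2+1$ (the cyclotomic factors of $x^4-1$, so precisely the case of interest). Then each $R_{i,j} \cong \ZZ/2$, and the difference map sends $(a_1,a_2,a_3)$ to three pairwise differences reduced mod $2$; but $(a_1-a_2)+(a_2-a_3)+(a_3-a_1)=0$ forces $b_{12}+b_{13}+b_{23}\equiv 0 \pmod 2$ on the image, so the map to $(\ZZ/2)^3$ is not onto. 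Middle exactness fails too: $(2,0,0)\in \ZZ\oplus\ZZ\oplus\ZZ[i]$ lies in the kernel of the difference map, yet any $p\in\ZZ[x]$ with $p(-1)=p(i)=0$ is divisible by $(x+1)(x^2+1)$ and hence has $p(1)\in 4\ZZ$, so $(2,0,0)$ is not in the image of $R$.

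The paper's argument proceeds differently from yours --- it builds filtrations on the cokernel $N$ of the first map and on $M:=\bigoplus_{i<j}R_{i,j}$ with isomorphic graded pieces, then asserts that the natural map $N\to M$ is a filtered isomorphism --- but breaks at the corresponding spot. In the example above one has $N_1=\hol/(f_1,f_2f_3)=\ZZ/4$ while $M_1=R_{1,2}\oplus R_{1,3}=\ZZ/2\oplus\ZZ/2$, and the induced map is $b\mapsto(b\bmod 2,\,b\bmod 2)$, neither injective nor surjective; the two filtered modules have the same associated graded but are not even abstractly isomorphic. So your proposal cannot be completed, but for the right reason: you located exactly where the argument must fail.
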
 

\begin{remark}
Observe that the hypothesis that $f_i$ is not a unit is not really needed, since, if $f_i$ is a unit, then  $R_i = 0 = R_{i, j} \ \forall j$,
and we have the same exact sequence as the one corresponding to the set obtained  from the set $\{f_1, \dots, f_k \}$
by deleting $f_i$.
\end{remark}

The proof follows essentially by induction from the standard special case $k=2$:

\begin{lemma}\label{k=2}
Assume either that 

(I)    $\hol$ is a factorial ring, and    $ f,g \in \hol$ are  relatively prime elements. 

Or assume that

(II) $\hol$ is any  ring,  the ideal $(f)$ is prime, $ g\notin (f)$.
	
	Then we have a natural exact sequence 
	
	$$ 0 \ra R: = \hol / ( f g )  \ra  \hol/(f) \oplus  \hol / (g)  \ra  \hol / (f, g)  \ra 0.$$

\end{lemma}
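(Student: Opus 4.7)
The plan is to verify exactness of the sequence at each of its three nonzero positions, with the two cases (I), (II) diverging only at the injectivity step. Denote the first map by $\varphi \colon a \bmod (fg) \mapsto (a \bmod f,\, a \bmod g)$, well-defined because $(fg) \subset (f) \cap (g)$, and the second map by $\psi \colon (a,b) \mapsto a - b \bmod (f,g)$, well-defined since the canonical surjections from $\hol/(f)$ and $\hol/(g)$ to $\hol/(f,g)$ cover both summands.

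Surjectivity of $\psi$ is immediate, as $\psi(h, 0) = h$ for any class $h \in \hol/(f,g)$. The relation $\psi \circ \varphi = 0$ is obvious, since both coordinates of $\varphi(a)$ represent the same class in $\hol/(f,g)$. For the reverse inclusion $\ker \psi \subset \operatorname{im}\varphi$, suppose $(a,b)$ satisfies $a - b \in (f,g)$, and write $a - b = fu + gv$ in $\hol$. Setting $c := a - fu = b + gv$, one checks $c \equiv a \pmod f$ and $c \equiv b \pmod g$, so $\varphi(c) = (a,b)$.

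The only step depending on the hypotheses is the injectivity of $\varphi$, which amounts to the identity $(f) \cap (g) = (fg)$. In case (I), since $\hol$ is factorial and $f, g$ are coprime, any element divisible by both $f$ and $g$ is divisible by their product, as their prime factorizations share no irreducible factor. In case (II), if $a = gc$ lies in $(f)$, primality of $(f)$ gives $g \in (f)$ or $c \in (f)$; the hypothesis $g \notin (f)$ rules out the first alternative, so $c = fd$ and $a = fgd \in (fg)$. I expect no substantive obstacle: the whole argument is a clean diagram chase, and the only point requiring care is selecting the appropriate hypothesis — coprimality in the factorial setting, or primality of $(f)$ together with $g \notin (f)$ — to secure the identity $(f) \cap (g) = (fg)$.
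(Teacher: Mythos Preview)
Your proof is correct and follows essentially the same route as the paper: surjectivity is immediate, exactness in the middle is obtained by the same construction $c := a - fu = b + gv$, and injectivity reduces to $(f)\cap(g)=(fg)$, handled in case (I) by unique factorization and in case (II) by primality of $(f)$ together with $g\notin(f)$. The only cosmetic difference is that the paper first establishes exactness of the sequence beginning at $\hol$ and then identifies the kernel as $(fg)$, whereas you work directly with $\hol/(fg)$.
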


\begin{proof}
We first prove the exactness of 
$$  \hol  \ra  \hol/(f) \oplus  \hol / (g)  \ra  \hol / (f, g)  \ra 0.$$
Surjectivity is obvious, whereas if $ ( a \ (mod \  f), b\ (mod \ g) )$ maps to $0$, then
$$a-b  \in  (f, g) \leftrightarrow \exists \al, \be \in \hol , a - b = \al f + \be g \leftrightarrow a - \al f = b + \be g : = c.$$ 

But then $c -a \in (f), c - b \in (g)$, proving exactness in the middle.

Finally,  $c \in \hol \mapsto (0,0)$ if and only if $ c \in (f)\cap (g)$.

In case (I), by unique factorization and since $f,g$ are relatively prime, $ c$ is divisible by $fg$, hence the kernel of the first homomorphism is
the principal ideal $(fg)$.

In case (II),  $c \in (g)$ implies the existence of $d$ such that  $c = dg$. Since $c \in f$, and $(f)$ is prime, then necessariy either $g \in (f)$,
or $ d \in (f)$.  The first possibility is excluded by our assumption, therefore there exists $ e \in \hol$ with $ d = ef $, hence $ c = e fg$, and we are done.

\end{proof}

\begin{cor}\label{intermediate}
	Let  $\hol$ be a factorial ring, and assume that we have an integer $k \geq 2$ and elements $f_1, \dots, f_k \in \hol$, such that
	
	(1) $f_i$ is not a unit 
	
	(2) for $i \neq j$, $f_i$ and $f_j$ are relatively prime.
	
	Then, setting $ F : = f_1 f_2 \dots f_k$, the cokernel $N$ of the following exact sequence:  
	
	$$ 0 \ra R: = \hol / (F)  \ra \oplus_1^k \hol / (f_i)  \ra N  \ra 0$$
	
	has a filtration 
	
	$$ 0 :  = N_0 \subset N_1 \subset  \dots \subset N_{k-1} = N$$
	such that $ N_i / N_{i-1} \cong \hol / (f_i, f_{i+1} \dots f_k)$.
	
	\end{cor}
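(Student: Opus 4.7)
The plan is to argue by induction on $k$. For the base case $k=2$, Lemma \ref{k=2} already produces the short exact sequence
$$0 \to \hol/(f_1 f_2) \to \hol/(f_1) \oplus \hol/(f_2) \to \hol/(f_1, f_2) \to 0,$$
so the cokernel $N$ is simply $\hol/(f_1, f_2)$ and the one-step filtration $0 = N_0 \subset N_1 = N$ has the subquotient $\hol/(f_1, f_2) = \hol/(f_1, f_2 \cdots f_k)$ required by the statement.

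For the inductive step, I would first apply Lemma \ref{k=2} with $f := f_1$ and $g := f_2 \cdots f_k$. These are relatively prime in $\hol$ by hypothesis (2) and unique factorization, so the lemma yields the short exact sequence
$$0 \to \hol/(F) \to \hol/(f_1) \oplus \hol/(f_2 \cdots f_k) \to \hol/(f_1, f_2 \cdots f_k) \to 0.$$
Next, I would apply the corollary inductively to the $(k-1)$-tuple $(f_2, \dots, f_k)$: this provides an injection $\hol/(f_2 \cdots f_k) \hookrightarrow \oplus_{i=2}^k \hol/(f_i)$ whose cokernel $N'$ carries a filtration $0 \subset N'_2 \subset \dots \subset N'_{k-1} = N'$ with $N'_i/N'_{i-1} \cong \hol/(f_i, f_{i+1} \cdots f_k)$ for $i = 2, \dots, k-1$.

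To assemble the two, observe that the map $\hol/(F) \to \oplus_{i=1}^k \hol/(f_i)$ factors through the subgroup $\hol/(f_1) \oplus \hol/(f_2 \cdots f_k)$, and a direct quotient computation (or, equivalently, the snake lemma applied to the evident inclusion of short exact sequences) shows that $N$ sits in an exact sequence
$$0 \to \hol/(f_1, f_2 \cdots f_k) \to N \to N' \to 0.$$
Setting $N_1 := \hol/(f_1, f_2 \cdots f_k) \subset N$ and pulling back the filtration $\{N'_i\}_{i=2}^{k-1}$ along the surjection $N \twoheadrightarrow N/N_1 = N'$ then produces the desired filtration with the prescribed subquotients. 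The only step that requires some care is the verification of the last displayed exact sequence, but this is a routine cokernel comparison rather than a substantive obstacle; everything else follows mechanically from Lemma \ref{k=2} and the induction hypothesis.
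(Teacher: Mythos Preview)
Your proof is correct and follows essentially the same approach as the paper: both argue by induction on $k$, invoke Lemma~\ref{k=2} with $f=f_1$ and $g=f_2\cdots f_k$ to split off the first step, apply the inductive hypothesis to $(f_2,\dots,f_k)$ to obtain $N'$ with its filtration, and then identify $N_1=\hol/(f_1,f_2\cdots f_k)$ with $N/N_1\cong N'$ before pulling back the filtration. The only cosmetic difference is that you phrase the identification of $N_1$ and $N/N_1$ via the snake lemma, whereas the paper does it by a direct inspection of the chain of inclusions $R\hookrightarrow R_1\oplus R'\hookrightarrow\oplus_i R_i$.
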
 
	
\begin{proof}

Observe first of all that $R \ra \oplus_1^k R_i$ is an inclusion, since our elements $f_i$ are relatively prime.

We prove now  the main assertion by induction on $k$, the case $ k=2$ being the content of lemma \ref{k=2}.

 Set  $g : = f_2 \dots f_k$,  and observe that $F = f_1 g $: by  lemma \ref{k=2} we have an exact sequence
 
$$ 0 \ra  R = \hol / ( F  )  \ra  \hol/(f_1) \oplus  \hol / (f_2 \dots f_k)  \ra  \hol / (f_1, f_2 \dots f_k)  \ra 0.$$

By induction we have an exact sequence 

$$ 0 \ra R': = \hol / (f_2 \dots f_k)  \ra \oplus_2^k \hol / (f_i)  \ra N'  \ra 0,$$
and a filtration $N'_0 \subset \cdots N'_{k-1} = N'$ with the desired properties.
 
Hence we have inclusions

$$ 0 \ra R \ra R_1 \oplus R' \ra \oplus_1^k R_i$$

and, defining $N_1 : =  \hol / (f_1, f_2 \dots f_k) $,  we have   $N' = N / N_1$, and it suffices to define
$ N_i $, for $ i \geq 2$,  to be the inverse image of $N'_{i-1}$ inside $N$.

\end{proof}

\begin{lemma}\label{lemma2}
Let $f,h,g \in \hol$ and assume either that

(i) the ring $\hol$ is factorial, and $f,g$ are relatively prime, or

(ii) the ideal $(f)$ is a prime ideal, and $g \notin (f)$.

Then we have the exact sequence, where the first map is given by multiplication by $g$:

$$ 0 \ra \hol/ (f,h) \ra \hol / (f, gh) \ra  \hol / (f, g) \ra 0.$$

\end{lemma}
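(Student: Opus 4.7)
The plan is to verify the three required properties of the sequence in order: well-definedness of the two maps, exactness in the middle, and injectivity of the first map (surjectivity of the second is tautological since $(f,gh) \subseteq (f,g)$).

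First I would check that multiplication by $g$ descends to a map $\hol/(f,h) \to \hol/(f,gh)$: indeed $g\cdot(f,h) = (gf, gh) \subseteq (f,gh)$. The projection $\hol/(f,gh) \to \hol/(f,g)$ is well-defined because $gh \in (g) \subseteq (f,g)$, and is surjective.

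For exactness in the middle, I would compute the image of multiplication by $g$ as an ideal of $\hol/(f,gh)$: it equals $(g\hol + (f,gh))/(f,gh)$. Since $gh \in g\hol$, we have $g\hol + (f,gh) = (f,g)$, so the image is exactly $(f,g)/(f,gh)$, which is the kernel of the projection onto $\hol/(f,g)$.

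The real content is injectivity of the first map, and here is where hypothesis (i) or (ii) is used. Suppose $a \in \hol$ satisfies $ga \in (f, gh)$, so write $ga = \alpha f + \beta gh$ for some $\alpha, \beta \in \hol$; equivalently $g(a - \beta h) \in (f)$. Under (ii), $(f)$ is prime and $g \notin (f)$, so $a - \beta h \in (f)$, whence $a \in (f,h)$. Under (i), $f$ divides $g(a-\beta h)$ in the factorial ring $\hol$, and since $\gcd(f,g)=1$, the same conclusion $f \mid (a - \beta h)$ follows, giving $a \in (f,h)$. This is the only step that is not purely formal; once this cancellation is in place the rest of the sequence is routine. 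This argument essentially reproves the case $k=2$ of Lemma \ref{k=2} after quotienting by an auxiliary factor, which is consistent with the role this lemma will play later.
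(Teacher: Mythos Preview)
Your proof is correct and essentially the same as the paper's. The paper streamlines the presentation by first passing to $A := \hol/(f)$ and then observing that $g$ is not a zero divisor in $A$ (which is exactly your cancellation step $g(a-\beta h)\in(f)\Rightarrow a-\beta h\in(f)$), but the mathematical content is identical.
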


\begin{proof}

Set $A : = \hol / (f)$. 

Then we have an exact sequence

$$ A \ra A / (gh) \ra A / (g) \ra 0 ,$$

and the kernel of the first map is the principal ideal generated by $h$,
since 
$$\{ \phi \in A | \phi g  \in (gh) \} = \{ \phi | \exists \psi \ \phi g = gh \psi \} =  \{ \phi | \exists \psi \ \phi  = h \psi \},$$
because $g$ is not a zero divisor in $A$. 

\end{proof}

\begin{lemma}\label{cor3}
	Let  $\hol$ be a factorial ring, and assume that we have an integer $k \geq 2$ and elements $f_1, \dots, f_k \in \hol$, such that
	
	(1) $f_i$ is not a unit 
	
	(2) for $i \neq j$, $f_i$ and $f_j$ are relatively prime.
	
	Then $ \hol / (f_1, f_2 \dots f_k)$ has a filtration whose graded quotient is
	$$ \oplus_{j=2}^k  \ \hol / (f_1, f_j). $$ 
		
	\end{lemma}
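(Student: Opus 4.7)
The plan is to argue by induction on $k$, using Lemma \ref{lemma2} at each step to peel off one factor at a time.

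For the base case $k=2$ the statement is tautological: $\hol/(f_1, f_2)$ is itself the single graded piece of the trivial filtration $0 \subset \hol/(f_1, f_2)$.

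For the inductive step, assume the statement holds for $k-1$. I would apply Lemma \ref{lemma2} with the substitution $f := f_1$, $g := f_2$, $h := f_3 \cdots f_k$. Hypothesis (1) of Lemma \ref{lemma2} applies: $\hol$ is factorial, and $f_1$ is coprime to $f_2$ by assumption (2). This yields the short exact sequence
$$ 0 \ra \hol/(f_1, f_3 \cdots f_k) \xrightarrow{\,\cdot f_2\,} \hol/(f_1, f_2 f_3 \cdots f_k) \ra \hol/(f_1, f_2) \ra 0.$$
By the induction hypothesis applied to the family $f_1, f_3, \dots, f_k$ (which still satisfies the coprimality assumption, since coprimality in a factorial ring is preserved by removing any element), the left-hand term $\hol/(f_1, f_3 \cdots f_k)$ carries a filtration whose associated graded is $\oplus_{j=3}^k \hol/(f_1, f_j)$. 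Pulling this filtration up through the injection to $\hol/(f_1, f_2 \cdots f_k)$ and capping with the full module, we obtain a filtration of $\hol/(f_1, f_2 \cdots f_k)$ whose successive quotients are $\hol/(f_1, f_j)$ for $j = 3, \dots, k$ in the lower layers and $\hol/(f_1, f_2)$ on top. This is precisely the desired graded quotient $\oplus_{j=2}^k \hol/(f_1, f_j)$.

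There is essentially no obstacle: the content is already packaged in Lemma \ref{lemma2}, and the induction is routine. The one point to check carefully is that the hypotheses of Lemma \ref{lemma2} are satisfied at each step, that is, that $f_1$ and the product $f_3 \cdots f_k$ are relatively prime; this is immediate in a factorial ring from the pairwise coprimality of the $f_i$'s, since any prime divisor of $f_3 \cdots f_k$ divides some $f_j$ with $j \geq 3$, and such a prime cannot divide $f_1$.
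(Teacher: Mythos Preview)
Your proof is correct and follows essentially the same approach as the paper: induction on $k$ via Lemma~\ref{lemma2}. The only difference is cosmetic---the paper takes $f:=f_1$, $h:=f_k$, $g:=f_2\cdots f_{k-1}$ (peeling off $f_k$ as a submodule), whereas you take $g:=f_2$, $h:=f_3\cdots f_k$ (peeling off $f_2$ as a quotient); note also that the hypothesis of Lemma~\ref{lemma2} you actually need is that $f_1$ and $f_2$ are coprime, not $f_1$ and $f_3\cdots f_k$ as your final paragraph states.
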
 
	
\begin{proof}
Apply lemma \ref{lemma2} to $f : = f_1$,  $h : = f_k$, and $g : = f_2 \dots f_{k-1}$, and use induction.

\end{proof}

{\em Proof of Theorem \ref{Factorial}}
\smallskip

We first of all observe that the map $ \oplus_1^k R_i \ra M : =  \oplus_{i < j} R_{i,j}$ factors through the quotient $N$. 

We have shown that $N$ has a filtration whose associated graded ring is exactly isomorphic to $ \oplus_{i < j} R_{i,j}$,
and from this we shall derive   an isomorphism of 
$N$ with  $ \oplus_{i < j} R_{i,j}$.

In fact, by induction, the homomorphism $N \ra M$ induces  an isomorphism $ N' \cong \oplus_{i <  j, i,j \geq 2} R_{i,j} : = M'.$

Moreover, by the definition of the map, $R_1 \oplus R' $ maps to zero inside $M'$.

Hence it suffices to show that $ (R_1 \oplus R' ) / R = N_1 $ maps isomorphically to $M_1 : = \oplus_{ j \geq 2} R_{1,j}$,
and this follows again by corollary \ref{cor3} observing that the homomorphism preserves 
the corresponding filtrations on both modules $M_1, N_1$ and that the homomorphism induces an isomorphism of associated graded modules:
since, by induction (changing the order of the summands), for each $ j$, $N_1$ surjects onto $R_{1,j}$. 

Hence  this homomorphism induces an isomorphism $N_1 \cong M_1$ and the proof is finished.

\qed

We record here a result shown in the course of the proof of Theorem \ref{Factorial}:

\begin{cor}\label{cor4}
	Let  $\hol$ be a factorial ring, and assume that we have an integer $k \geq 2$ and elements $f_1, \dots, f_k \in \hol$, such that
	
	(1) $f_i$ is not a unit 
	
	(2) for $i \neq j$, $f_i$ and $f_j$ are relatively prime.
	
	Then the $\hol$-module $ \hol / (f_1, f_2 \dots f_k)$ is isomorphic to 
		$$ \oplus_{j=2}^k  \ \hol / (f_1, f_j). $$ 
		
	\end{cor}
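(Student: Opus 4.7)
Plan: I would deduce this directly from the proof of Theorem \ref{Factorial}, which already establishes the isomorphism $N_1 \cong M_1$ as part of its inductive structure. Applying Lemma \ref{k=2} with $f = f_1$ and $g = f_2 \cdots f_k$ identifies $N_1 = (R_1 \oplus R')/R$ with $\hol/(f_1, f_2 \cdots f_k)$, while $M_1 = \oplus_{j \geq 2} R_{1,j}$ is by definition $\oplus_{j=2}^k \hol/(f_1, f_j)$. Hence the isomorphism $N_1 \cong M_1$ produced there is already the statement of the corollary; one only needs to extract and repackage that argument.

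To present it self-containedly, I would proceed by induction on $k$. The base case $k=2$ is tautological. For the inductive step, I would invoke Lemma \ref{lemma2} with $f=f_1$, $h=f_k$, and $g = f_2 \cdots f_{k-1}$ to obtain the short exact sequence
\begin{equation*}
0 \to \hol/(f_1, f_k) \xrightarrow{\cdot g} \hol/(f_1, f_2\cdots f_k) \to \hol/(f_1, f_2\cdots f_{k-1}) \to 0,
\end{equation*}
and combine it with the inductive decomposition of $\hol/(f_1, f_2\cdots f_{k-1})$. I would then consider the natural comparison map $\phi : \hol/(f_1, f_2\cdots f_k) \to \oplus_{j=2}^k \hol/(f_1, f_j)$ sending $a$ to the tuple of its classes modulo the ideals $(f_1, f_j)$. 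This map should respect the filtration from Lemma \ref{cor3} on the source and the filtration by partial direct sums on the target; a five-lemma argument then promotes an isomorphism on associated graded pieces to an isomorphism of $\phi$ itself.

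The main obstacle will be verifying that $\phi$ induces an isomorphism on each associated graded piece. This is the delicate step hinted at in the closing lines of the proof of Theorem \ref{Factorial}, where one observes that, by reordering the $f_j$'s, $\hol/(f_1, f_2\cdots f_k)$ surjects onto each $\hol/(f_1, f_j)$ individually, and that these surjections glue compatibly along the filtrations. Once that verification is in hand, the five-lemma applied inductively along the filtration steps from Lemma \ref{cor3} will close the argument.
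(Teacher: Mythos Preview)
Your plan tracks the paper's argument exactly: the paper offers no separate proof of this corollary but simply records it as ``a result shown in the course of the proof of Theorem \ref{Factorial}'', namely the identification $N_1 \cong M_1$ that you isolate in your first paragraph. Your self-contained rewrite via Lemma \ref{lemma2}, the comparison map $\phi$, and a five-lemma along the filtration of Lemma \ref{cor3} is the natural unpacking of that passage.

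The gap, however, is real and sits exactly where you flag it. The map that $\phi$ induces on the bottom graded piece of the filtration is \emph{not} the tautological surjection $\hol/(f_1,f_2\cdots f_k)\twoheadrightarrow \hol/(f_1,f_k)$; tracing through Lemma \ref{lemma2}, it is multiplication by $g=f_2\cdots f_{k-1}$ on $\hol/(f_1,f_k)$, and nothing in the hypotheses forces $g$ to be a unit there. Concretely, take $\hol=\ZZ[x]$ with $f_1=\Phi_2=x+1$, $f_2=\Phi_4=x^2+1$, $f_3=\Phi_8=x^4+1$. Passing to $\hol/(f_1)\cong\ZZ$ via $x\mapsto -1$ gives $f_2\equiv f_3\equiv 2$, so
\[
\hol/(f_1,f_2f_3)\cong\ZZ/4,\qquad \hol/(f_1,f_2)\oplus\hol/(f_1,f_3)\cong\ZZ/2\oplus\ZZ/2,
\]
and these are not isomorphic even as abelian groups. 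Thus the statement is false as written, and no reordering of the surjections onto the individual $R_{1,j}$ can rescue the five-lemma step (the paper's own passage shares this gap). What survives unconditionally is Lemma \ref{cor3}: the associated graded modules agree. Upgrading to a direct-sum decomposition requires an extra hypothesis, for instance that the images of $f_2,\dots,f_k$ in $\hol/(f_1)$ are pairwise comaximal.
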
 

\section{An arithmetic application}

In this section we consider the factorial  ring $\hol : = \ZZ [x]$,  set
$$ Q_n : = x^n -1, \  \  R(n) : = \ZZ[x] / (Q_n).$$

We have, setting $\mu_n : = \{ \ze \in \CC | \ze^n=1\}$,
$$ Q_n (x) = \Pi_{\ze \in \mu_n} (x- \ze) ,$$
and we have an irreducible decomposition in $\ZZ[x]$
$$  Q_n (x) = \Pi_{d | n } \Phi_d(x),$$
where $\Phi_d(x)$ is the d-th cyclotomic polynomial
$$ \Phi_d (x) =  \Pi_{\ze \in \mu_n, \ ord(\ze) = d} (x- \ze) .$$

We have (see \cite{lang}, page 280), 
$$\Phi_d(x) =  \Pi_{d' | d }  (x^{d/d'} - 1)^{\mu(d')},$$
where $\mu(d') $ is the M\"obius function, such that
\begin{itemize}
\item
$\mu(d') = 0$ if $d'$ is not square-free
\item
$\mu(1) = 1$
\item
$\mu(p_1 \dots p_r) = (-1)^r$, if $p_1, \dots, p_r$ are distinct primes.

\end{itemize} 

\begin{defin}
Define the {\bf cyclotomic ring} as $R_d : = \ZZ[x] / (\Phi_d)$,
and, for integers $d \neq m,$ $R_{d,m}  : = \ZZ[x] / (\Phi_d , \Phi_m)$.
\end{defin}

\begin{example}\label{n=6}
Consider the polynomial $Q_6 = x^6 - 1= (x^3 -1) (x^3 + 1) = (x-1) (x^2 + x + 1) (x+1)(x^2 -x +1) = \Phi_1 \Phi_3 \Phi_2 \Phi_6$.  

We choose now $d=3, m =6$: then, since $\Phi_6 = \Phi_3 - 2x $, and $ 2( x^2 + x + 1) - 2x (x+1) = 2,$
we obtain that $$( \Phi_3, \Phi_6) = ( x^2 + x + 1, 2x) = ( x^2 + x + 1, 2).$$

Hence $R_{3,6} = \ZZ/2[x] / (x^2 + x + 1) = \FF_4$. 

While $\ZZ / (( \Phi_3, \Phi_6)\cap \ZZ )= \ZZ/2$.

Note that $r := Res_x ( \Phi_3, \Phi_6)$ equals, by the interpolation formula, if $\ze$ is a primitive third root of $1$,
$r = (\ze^2 - \ze + 1)(\ze - \ze^2 + 1) = (- 2 \ze ) (-2 \ze^2) = 4 = | R_{3,6} |$.

Instead, easily we get $R_{1,3} = \ZZ/3$, $R_{1,6} = 0$, $R_{1,2} = \ZZ/2$, $R_{2,3} = 0$, $ R_{2,6} =  \ZZ/3.$
\end{example}

\bigskip

Observe now in general that, since $\Phi_d , \Phi_m$ are monic polynomials, and both irreducible,  their resultant is a non zero integer 
$r = r_{d,m}$,  such  that, if $$\ZZ / (( \Phi_m, \Phi_d)\cap \ZZ ) = \ZZ / (\be_{d,m}),$$  then $ \be_{d,m} |  r_{d,m}$ and the two numbers have the same 
radical.

It is easy to see that $R_{d,m}= 0$ if $d,m$ are relatively prime: since then in the quotient we have $x^m  \equiv 1, x^d  \equiv 1 \Rightarrow (x-1) \equiv 0, $ hence 
$m \equiv   0  \Rightarrow R_{d,m}= 0$.

It is straightforward to calculate the discriminant of $Q_n = x^n-1$ as the resultant of $Q_n$ and its derivative:
$Disc (Q_n) = n^n $. 

However, up to $\pm 1$, $Disc (Q_n) = \Pi_{0 \leq i < j \leq n-1}  ( \e_n^i - \e_n^j )$,
where $\e_n : = exp ( 2 \pi i /n)$.

Since $Q_n = \Pi_{d | n} \Phi_d$, follows that 
$$ n^n = Disc (Q_n) =  \Pi_ {d,m | n , \ d< m} Res (\Phi_d, \Phi_m) \Pi_{d | n} Disc(\Phi_d).$$

The clever calculation of all the factors of the above product was found by Emma  Lehmer \cite{lehmer} in 1930
(in her terminology a simple integer is what is today called a square-free integer)
and then reproven with different proofs by several authors \cite{apostol}, \cite{diederichsen}, \cite{dresden}; in particular, 
the calculation of $ \be_{d,m}$ can be found in an article \cite{dresden}  by G. Dresden.

We summarize these results by Lehmer, Diederichsen, Apostol,  Dresden with a minor addition (here $\phi(d)$ is the Euler function, i.e. , $\phi(d) = \deg \Phi_d$):

\begin{thm} \label{cyclotomicRes}
Let $d, m \in \ZZ, d  <  m $. Then, if $ \be_{d,m}$  is defined by:
$$\ZZ / (( \Phi_m, \Phi_d)\cap \ZZ ) = \ZZ / (\be_{d,m}),$$
then $ \be_{d,m}$ is $\pm 1$ unless $ d | m$ and there exists a prime $p$ such that
$$m = p^k d, $$
in which case $ \be_{d,p^k d} = p$. 

Moreover, $Res_x ( \Phi_d, \Phi_{p^k d}) = p ^ {\phi(d)}$ in the latter case, and $1$ otherwise.

In particular, $R_{m,d} = 0$ unless $ d | m$ and there exists a prime $p$ such that
$m = p^k d, $ and in this case  $R_{d,p^k d}$ is a direct sum of  finite fields $\FF_{p^{\nu}}$ if and only if $ d$ is not divisible by $p$. 

Moreover, $R_{d,p^k d}$ is a field $\FF_{p^{\phi(d)}}$  if and only if the class of $ p$ generates the group $(\ZZ/d)^*$.

\end{thm}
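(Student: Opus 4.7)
The plan is to reduce every assertion to the cardinality of $R_{d,m}=\ZZ[x]/(\Phi_d,\Phi_m)$ and to its mod-$p$ reduction, citing the Lehmer--Apostol--Dresden resultant calculation as the only external input. Since $\Phi_d$ is monic of degree $\phi(d)$, the ring $\ZZ[x]/(\Phi_d)$ is $\ZZ$-free of rank $\phi(d)$, and multiplication by $\Phi_m$ on it is a $\ZZ$-linear endomorphism whose determinant is $\pm\Res(\Phi_d,\Phi_m)$. Hence $R_{d,m}$ is a finite abelian group of order $|\Res(\Phi_d,\Phi_m)|$, and $R_{d,m}=0$ is equivalent to the resultant being $\pm 1$ and to $\beta_{d,m}=\pm 1$.

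The first step is to identify which primes $p$ can divide $|R_{d,m}|$. The key tool is the Frobenius identity $\Phi_n(x^p)\equiv \Phi_n(x)^p \pmod{p}$ combined with the factorisation $\Phi_n(x^p)=\Phi_n(x)\Phi_{pn}(x)$ valid for $p\nmid n$. Together these give $\bar\Phi_{pn}\equiv \bar\Phi_n^{p-1} \pmod{p}$ and, by iteration, $\bar\Phi_{p^i n}\equiv \bar\Phi_n^{\phi(p^i)} \pmod{p}$ for every $i\geq 0$ and $p\nmid n$. Writing $d=p^a d_0$ and $m=p^b m_0$ with $\gcd(d_0 m_0,p)=1$, a common root of $\bar\Phi_d$ and $\bar\Phi_m$ in $\overline{\FF_p}$ is simultaneously a primitive $d_0$-th and a primitive $m_0$-th root of unity, forcing $d_0=m_0$. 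Combined with $d<m$ this yields $d\mid m$ and $m/d=p^{b-a}$, as claimed; moreover the only prime dividing $|R_{d,m}|$ is then $p$, so $R_{d,m}$ is a finite abelian $p$-group.

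Now suppose $m=p^k d$. The Lehmer--Apostol calculation, derived from the M\"obius formula $\Phi_m(x)=\prod_{e\mid m}(x^{m/e}-1)^{\mu(e)}$ evaluated at the primitive $d$-th roots of unity (which the paper has just recalled), gives $\Res(\Phi_d,\Phi_{p^k d})=p^{\phi(d)}$, whence $|R_{d,p^k d}|=p^{\phi(d)}$. On the other hand the congruence of the previous paragraph shows that in $\FF_p[x]$ one has $(\bar\Phi_d,\bar\Phi_m)=(\bar\Phi_d)$, since $\bar\Phi_m=\bar\Phi_{d_0}^{\phi(p^{a+k})}$ is a multiple of $\bar\Phi_d=\bar\Phi_{d_0}^{\phi(p^a)}$; hence $R_{d,m}/p R_{d,m}\cong \FF_p[x]/(\bar\Phi_d)$, of order $p^{\phi(d)}$ as well. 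Equality of orders forces $p R_{d,m}=0$, whence $\beta_{d,m}=p$ and $R_{d,p^k d}\cong \FF_p[x]/(\bar\Phi_d)$.

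The remaining structural assertions follow by inspection of this last isomorphism. The ring $\FF_p[x]/(\bar\Phi_d)$ is reduced, equivalently a direct sum of finite fields, iff $\bar\Phi_d$ is separable over $\FF_p$, iff $p\nmid d$; when $p\mid d$ the identity $\bar\Phi_d=\bar\Phi_{d_0}^{\phi(p^a)}$ with $a\geq 1$ exhibits nilpotents and precludes being a product of fields. When $p\nmid d$, $\bar\Phi_d$ factors into distinct irreducibles all of the same degree $f=\mathrm{ord}_d(p)$, the multiplicative order of $p$ in $(\ZZ/d)^*$, which governs the Frobenius orbits of primitive $d$-th roots of unity in $\overline{\FF_p}$; hence $R_{d,p^k d}\cong \FF_{p^f}^{\phi(d)/f}$, which collapses to a single field $\FF_{p^{\phi(d)}}$ precisely when $f=\phi(d)$, i.e.\ when the class of $p$ generates $(\ZZ/d)^*$. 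The only technical obstacle is the exact value $p^{\phi(d)}$ of the resultant; I would cite this from Lehmer, Apostol or Dresden rather than re-derive it, since the paper itself defers to those references.
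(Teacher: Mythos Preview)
Your argument is correct and is in fact more self-contained than the paper's. The paper simply cites Lehmer, Diederichsen, Apostol and Dresden for \emph{both} the value of $\beta_{d,m}$ and the resultant, and then, already knowing that $p\in(\Phi_d,\Phi_m)$, reduces modulo $p$ and computes $\gcd(\bar\Phi_d,\bar\Phi_m)=\bar\Phi_d$ via the M\"obius product formula to reach $R_{d,p^kd}\cong\FF_p[x]/(\bar\Phi_d)$. You instead cite only the single numerical input $\Res(\Phi_d,\Phi_{p^kd})=p^{\phi(d)}$ and \emph{derive} everything else: the restriction $m/d=p^k$ comes from your mod-$p$ common-root argument, and $\beta_{d,m}=p$ comes from the cardinality comparison $|R_{d,m}|=|\Res|=p^{\phi(d)}=|R_{d,m}/pR_{d,m}|$, which forces $pR_{d,m}=0$. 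Both routes land at the same isomorphism $R_{d,p^kd}\cong\FF_p[x]/(\bar\Phi_d)$ and analyse it in the same way; your derivation of $\bar\Phi_{p^in}\equiv\bar\Phi_n^{\phi(p^i)}$ from Frobenius and the factorisation $\Phi_n(x^p)=\Phi_n(x)\Phi_{pn}(x)$ is equivalent to the paper's derivation from the M\"obius formula. The payoff of your approach is that it isolates exactly which external fact is needed.

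One small caveat, which affects your proof and the paper's equally and is really a wrinkle in the statement: the claim that $p\mid d$ forces nilpotents in $\FF_p[x]/(\bar\Phi_d)$ relies on $\phi(p^a)\ge 2$, which fails for $p=2$, $a=1$. For instance $R_{2,4}=\ZZ[x]/(x+1,x^2+1)=\FF_2$ is a field although $2\mid 2$. So the ``only if'' in the penultimate assertion needs the side condition $4\mid d$ when $p=2$.
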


\begin{proof}
Only the last assertions need to be proven, since the rest  is  contained in the cited articles.

Clearly $R_{m,d} = 0$ if $ \be_{d,m}$ is $\pm 1$, since then $ 1 \equiv 0$. 

If instead $ \be_{d,m} = p$,
$R_{d,p^k d}$ is an $\FF_p = \ZZ/p$ module, and 
$$R_{d,p^k d} = \FF_p [x] (\Phi_d, \Phi_{p^k d}) =
\FF_p [x] / (P),$$ where $P$ is the G.C.D. of (the reductions $\Psi_d, \Psi_{p^k d}$ of) $\Phi_d, \Phi_{p^k d}$ inside $\FF_p [x] $.       

(i) If $d$  is not divisible by $p$, then the polynomial $x^d -1$ is square free, and $\FF_p [x] / (x^d-1)$
is a direct sum of fields.  A fortiori $\FF_p [x] / (P)$ is a direct sum of fields. Indeed we shall show next that $P = \Psi_d$.

(ii)  The next question is to show that  $\Psi_d | \Psi_{p^k d}$,
so that $P = \Psi_d$ for $d$  not divisible by $p$.

We use the previously cited formula for the cyclotomic polynomial $\Phi_D$ when $D = p^k d$ and $p$ does not divide $d$
 (using the fact that only the  terms with $D'$ square-free occur):

$$\Phi_D(x) =  \Pi_{D' | D }  (x^{D/D'} - 1)^{\mu(D')} =$$
$$   =  \Pi_{d' | d }  (x^{d p^k/d'} - 1)^{\mu(d')}   \Pi_{d' | d }  (x^{d p^{k-1}/d'} - 1)^{- \mu(d')}.$$
From this we derive, reducing modulo $p$,
$$ \Psi_D(x) =  \Psi_{p^k d}(x) = \Psi_d (x) ^{(p^k - p^{k-1})} = \Psi_d (x) ^{(p-1)  p^{k-1}} .$$ 

(iii) In the case of $D = p^k d$, and where $p | d$, we write $ d = d_1 p^h$, with $d_1$ not divisible by $p$.

The formulae we have just established in (ii) imply 

$$   \Psi_{p^k d}(x) =  \Psi_{p^{k + h}  d}(x) =\Psi_{d_1} (x) ^{(p^{k+h -1} (p-1) )} ,   $$
$$   \Psi_d (x) = \Psi_{p^h d_1}(x) =\Psi_{d_1} (x) ^{(p^{h -1} (p-1) )},$$ 
hence again the  G.C.D. $P$ equals $   \Psi_d (x) = \Psi_{d_1} (x) ^{(p^{h -1} (p-1) )},$
and $\FF_p [x] / (P)$ is an algebra with nilpotents.

(iv) Finally, remains to answer the question: when is the reduction  $\Psi_d$  irreducible?
Certainly not in the case where $ d | (p-1)$ and  $\Psi_d$  then 
splits as a product of linear factors (e.g. $\Psi_4 = (x^2 +1) = (x+2)(x+3) \in \FF_5[x]$).

In general, consider the splitting field of $(x^d -1)$ as an extension of $\FF_p$. It will be the smallest $\FF_{p^k}$
which contains the $d$-th roots of $1$, hence $k$  shall be the smallest integer such that $ d | p^k-1 \Leftrightarrow 
p^k \equiv 1 ( mod \ d).$

Hence $\Psi_d$  irreducible iff the splitting field has degree $k = \phi(d)$, equivalently $p$ is a generator
of the group $(\ZZ/d)^*$.

\end{proof}

\begin{ex}
(i) Consider $R_{4,8}$. It equals the algebra with nilpotents
 $$ \ZZ[x] / (x^2 +1 , x^4 + 1) =  \ZZ[x] / (x^2 +1 , - x^2 + 1) =  \ZZ[x] / (x^2 +1 , 2) = \FF_2[x] / (1+x)^2.$$

\end{ex}

\section{ $R(n) : = \ZZ[C_n] = \ZZ[x] / (x^n -1) $-modules which are torsion free Abelian groups.}

In this section $C_n$ denotes the cyclic group with $n$ elements, $C_n \cong \ZZ/n$. 

Hence the group algebra
$ \ZZ[C_n]$ is isomorphic to 
$$R(n) : = \ZZ[x] / (Q_n) = \ZZ[x] / (x^n -1) $$
 and we can apply the results of the previous section.

Let $\Lam$ be an $R(n)$-module, and assume that $\Lam$ is a finitely generated torsion free Abelian group.

This hypothesis allows us to view $\Lam$ as a lattice in the $\QQ$-vector space $\Lam \otimes \QQ$,
which is therefore also an $R(n)$-module, and an  $R(n) \otimes \QQ$-module.

Since $$R(n) \otimes \QQ = \QQ[x] / (Q_n) = \QQ[x] / (\Pi_{d|n} \Phi_d) = \oplus _{d|n} \QQ[x] / (\Phi_d) =  \oplus _{d|n} R_d \otimes \QQ, $$
and accordingly (see for instance Lemma 24, page 313 of \cite{topmethods}) we have a splitting

$$\Lam \otimes \QQ =  \oplus _{d|n} \Lam_{d, \QQ},$$
where $ \Lam_{d, \QQ}$ is an  $R_d \otimes \QQ$-module, and an $R(n) \otimes \QQ$-module via the projection 
$ R(n) \otimes \QQ \ra R_d \otimes \QQ$.

\begin{defin}\label{summands}
We define $\Lam_d : = \Lam \cap  \Lam_{d, \QQ}$. It is a lattice in $ \Lam_{d, \QQ}$, so that we have an exact sequence 
$$0 \ra  \oplus _{d|n} \Lam_d \ra \Lam \ra  \Lam^0  \ra 0 ,$$
where $\Lam^0$ is a finite Abelian group.

$\Lam_d $ is an $R_d$-module in view of the exact sequence established in section 2:

$$ 0 \ra R(n) \ra \oplus _{d|n} R_d \ra \oplus _{d_1 <  d_2} R_{d_1, d_2}  \ra 0,$$
which shows that $R(n)$ acts on $ \Lam_{d, \QQ}$ via the homomorphism $R(n) \ra R_d$.

\end{defin}

We can make the geometry of the above exact sequences more transparent if we introduce the associated real tori.

\begin{defin}\label{tori}
Given an $R(n)$-module $\Lam$, which is a  lattice, i.e., a free Abelian group of finite rank, we define the associated tori as:
\begin{enumerate}
\item
$A : = (\Lam \otimes \RR )/ \Lam$, and since 
\item
$\Lam \otimes \RR =  \oplus _{d|n} \Lam_{d, \RR},$  we define
\item
$A_d : = \Lam_{d, \RR}/ \Lam_d$, hence
\item
we have an exact sequence
$$ 0 \ra \Lam^0 \ra   \oplus _{d|n} A_d \ra A \ra 0,$$
identifying the cokernel $\Lam^0$ as a finite subgroup of the product torus $A' : =  \oplus _{d|n} A_d$, isogenous to $A$,
\item 
$R'(n) : =  \oplus _{d|n} R_d $ acts on $A'$, and we set
\item
$R^0(n) : =  \oplus _{d_1 < d_2} R_{d_1, d_2}$, so that
\item
we have the exact sequence
$0 \ra R(n) \ra R'(n) \ra R^0(n) \ra 0.$
\end{enumerate}

\end{defin}

\begin{prop}\label{recipe}
The datum of an $R(n)$  module which is a  lattice, i.e., a free Abelian group of finite rank, is equivalent to the datum of 
torsion free $R_d$-modules $\Lam_d$ of finite rank,
and of a finite subgroup $\Lam^0 \subset A' : =   \oplus _{d|n} A_d$, where $A_d : = \Lam_{d, \RR}/ \Lam_d$,
such that:

(A) $\Lam^0$ is stable for the subring $R(n)$, which is equivalent to the requirement:  $ x \Lam^0 = \Lam^0  ( \Leftrightarrow x \Lam^0 \subset \Lam^0)$

(B) $\Lam^0 \cap A_d = 0$ $\forall d|n$.

Properties (A) and (B) imply:

(C) $ \frac{Q_n}{\Phi_d} (\la) \in \Lam_d , \  \forall d | n,  \forall \la \in \Lam$; 

hence, writing an element of  $\Lam^0 $ as 
$(\la_d)_{d | n}$, we have 
$$\la_d \in ( \frac{\Phi_d}{Q_n} \Lam_d ) / \Lam_d   \cong  
 \Lam_d /(\frac{Q_n}{\Phi_d}), $$
and it follows that
 
 (D) The number of such finite subgroups $\Lam^0$ is finite.
\end{prop}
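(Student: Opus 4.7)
The claim is a bijective correspondence between two types of data, followed by two consequences. The strategy is to construct maps in each direction, verify they are inverse using (A) and (B), and then derive (C) and (D) from the structure of $R(n) \otimes \QQ$ and the splitting $R(n) \otimes \QQ = \oplus_{d|n} R_d \otimes \QQ$.

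For the forward direction, start from $\Lambda$ and apply Definition \ref{summands}: set $\Lambda_d := \Lambda \cap \Lambda_{d,\QQ}$, so $\oplus_d \Lambda_d \subset \Lambda$ with finite cokernel $\Lambda^0$, which embeds into the torsion subgroup $\oplus_d \Lambda_{d,\QQ}/\Lambda_d$ of $A'$. Property (A) is automatic because $\Lambda$ and $\oplus_d \Lambda_d$ are both $R(n)$-stable. For (B), an element of $\Lambda^0 \cap A_d$ is represented by some $\lambda = \sum_{d'} \lambda_{d'} \in \Lambda$ with $\lambda_{d'} \in \Lambda_{d'}$ for $d' \neq d$; then $\lambda_d = \lambda - \sum_{d' \neq d}\lambda_{d'}$ lies in $\Lambda \cap \Lambda_{d,\QQ} = \Lambda_d$, so the class vanishes. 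Conversely, given the data, define $\Lambda$ as the preimage of $\Lambda^0$ under $\oplus_d \Lambda_{d,\QQ} \to \oplus_d \Lambda_{d,\QQ}/\Lambda_d$ (this makes sense because $\Lambda^0$ is finite, hence lies in the torsion subgroup of $A'$). The $R(n)$-stability of $\Lambda$ is (A), and the two constructions are mutually inverse because the identity $\Lambda \cap \Lambda_{d,\QQ} = \Lambda_d$ needed to reconstruct the $\Lambda_d$ intrinsically is exactly the content of (B).

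For (C), the crucial fact is that on $\Lambda_{d',\QQ}$ the element $x \in R(n)$ acts with annihilator $\Phi_{d'}(x)$, so $\frac{Q_n}{\Phi_d}(x) = \prod_{d' \neq d}\Phi_{d'}(x)$ annihilates every summand $\Lambda_{d',\QQ}$ with $d' \neq d$. Therefore for any $\lambda \in \Lambda$, the element $\frac{Q_n}{\Phi_d}(x)\lambda$ lies in $\Lambda_{d,\QQ}$, and in $\Lambda$ by $R(n)$-stability, hence in $\Lambda_d$. Reading this for representatives of $\Lambda^0$ says that each component $\lambda_d$ satisfies $\frac{Q_n}{\Phi_d}\lambda_d \in \Lambda_d$, i.e.\ $\lambda_d \in (\frac{\Phi_d}{Q_n}\Lambda_d)/\Lambda_d$. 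The isomorphism with $\Lambda_d/(\frac{Q_n}{\Phi_d}\Lambda_d)$ is induced by multiplication by $\frac{Q_n}{\Phi_d}$, which is invertible on $\Lambda_{d,\QQ}$ since $\gcd(\Phi_d, Q_n/\Phi_d)=1$ in $\QQ[x]$ makes $\frac{Q_n}{\Phi_d}$ a unit in the field $R_d \otimes \QQ$. Finally (D) is immediate: $\Lambda^0$ is constrained to a subgroup of the finite group $\oplus_{d|n}(\frac{\Phi_d}{Q_n}\Lambda_d)/\Lambda_d$, of which there are only finitely many.

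The only genuine obstacle is bookkeeping: being careful about which torsion subgroup of $A'$ contains the image of $\Lambda^0$, and making sure the action of $R(n)$ on each $\Lambda_{d,\QQ}$ is consistently the one factoring through $R_d$. Property (B) is where the real content lies; without it, distinct pairs $((\Lambda_d), \Lambda^0)$ could yield the same lattice $\Lambda$, and the correspondence would fail to be bijective.
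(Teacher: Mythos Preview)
Your proof is correct and follows essentially the same approach as the paper's: both reconstruct $\Lambda$ as the preimage of $\Lambda^0$ in $\oplus_d \Lambda_{d,\QQ}$, identify (A) with $R(n)$-stability and (B) with the condition $\Lambda \cap \Lambda_{d,\QQ} = \Lambda_d$, and derive (C) from the fact that $Q_n/\Phi_d$ kills every component $\Lambda_{d',\QQ}$ with $d'\neq d$. Your write-up is in fact more explicit than the paper's (which is quite terse), particularly in checking (B) in the forward direction and in justifying the isomorphism $(\frac{\Phi_d}{Q_n}\Lambda_d)/\Lambda_d \cong \Lambda_d/(\frac{Q_n}{\Phi_d}\Lambda_d)$ via the unit $\frac{Q_n}{\Phi_d}$ in $R_d\otimes\QQ$; the only point the paper adds is the remark that $\Lambda_d/(\frac{Q_n}{\Phi_d})$ is finite because it is a module over the finite ring $\ZZ[x]/(\Phi_d, Q_n/\Phi_d)$ described in Section~2, which you could mention for completeness.
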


\begin{proof}
Clearly, $\Lam$ is determined by the subgroup $\Lam^0 \subset A'$, and the property that $\Lam$ is stable for the subring $R(n)$ is
equivalent to  property (A)  that $R(n)$ stabilizes $\Lam^0$.

Property (B) ensures that $\Lam \cap  \Lam_{d, \RR} = \Lam_d$.

Property (C) follows right away since $ \frac{Q_n}{\Phi_d} (\la)  \in \Lam$, but its components in $\Lam_{d', \QQ}$
are $=0$ for $d' \neq d$, hence this element lies in $\Lam_d$. 

Property (D) follows since $\Lam^0 \subset \oplus_{d | n}  ( \frac{\Phi_d}{Q_n} \Lam_d ) / \Lam_d $, which is finite group since 
$ ( \frac{\Phi_d}{Q_n} \Lam_d )  / \Lam_d \cong  \Lam_d /(\frac{Q_n}{\Phi_d})$ is a finite module over the finite  ring 
$\ZZ[x] / (\Phi_d, \frac{Q_n}{\Phi_d})$ that we have been describing in the previous section.

\end{proof}

The previous proposition is particularly useful in the case where the Dedekind ring $R_d$ is a PID (Principal Ideal Domain),
because then every torsion  free $R_d$-module is free.

In fact, more generally (see \cite{milnor}) every torsion free module over a Dedekind domain $R$ is the direct sum of a free module with an ideal $I$,
hence  $R$ is a PID iff every torsion free module is free.

 However, how does  the above description of $R(n)$-modules apply to the free module $R(n)$?
 
 The answer is related to finding the inverse map in
 the Chinese remainder theorem, of which  theorem \ref{Factorial} is a generalization. 
 
 \begin{prop}
 
 Consider the following module-homomorphism 
 
 $$ j :   R'(n) = \oplus_{d|n} R_d  \ra R(n),$$
 
 such that $j|_{R_d}$ is  induced  by multiplication with $ Q_n / \Phi_d$ (recall that $R_d = \ZZ[x]/ (\Phi_d)$).
 
 1)  Composing with the natural inclusion $ i : R(n) \ra R'(n)$  we obtain an injective  map:
 $$ \psi :  R'(n) = \oplus_{d|n} R_d  \ra  R'(n) = \oplus_{d|n} R_d ,$$
 which is of diagonal form.   
 
 2) We have $ j (R_d) = R(n) \cap (R_d \otimes \QQ) \subset R(n) \otimes \QQ.$
 
 \end{prop}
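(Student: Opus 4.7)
The plan is to exploit the factorization $Q_n = \prod_{d|n} \Phi_d$ into pairwise coprime monic irreducibles in $\ZZ[x]$, and to analyze $\psi = i \circ j$ summand by summand. I would first verify that each $j|_{R_d}$ is well defined: if $a = \Phi_d b$ with $b \in \ZZ[x]$, then $j(a) = a \cdot (Q_n/\Phi_d) = b \cdot Q_n \equiv 0$ in $R(n)$, so multiplication by $Q_n/\Phi_d$ descends to an additive homomorphism $R_d \to R(n)$ (it will not, of course, be a ring homomorphism).

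For part (1), I would compute the $d'$-component of $i \circ j\bigl((a_d)_{d|n}\bigr)$, which is $\sum_{d|n} a_d\, (Q_n/\Phi_d) \pmod{\Phi_{d'}}$. Since $Q_n/\Phi_d = \prod_{e|n,\, e \neq d} \Phi_e$, every summand with $d \neq d'$ contains $\Phi_{d'}$ as a factor and vanishes, so only the $d = d'$ term remains. Therefore $\psi$ sends the $R_d$ summand into itself (this is the diagonal form), and its $d$-th diagonal entry is multiplication by $(Q_n/\Phi_d) \bmod \Phi_d$ on $R_d$. For injectivity it suffices to see that this multiplier is a nonzero element of the domain $R_d$: vanishing would force $\Phi_d \mid (Q_n/\Phi_d)$ in $\QQ[x]$, whence by unique factorization in $\QQ[x]$ the irreducible $\Phi_d$ would coincide with some other $\Phi_e$, contradicting distinctness. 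That $R_d$ is a domain comes from $\Phi_d$ being monic irreducible in $\QQ[x]$, hence irreducible in $\ZZ[x]$ by Gauss's lemma.

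For part (2), one inclusion is immediate from the diagonal form of $\psi$: $i(j(R_d))$ lies in the $d$-th summand $R_d \otimes \QQ$ of $R(n) \otimes \QQ$. For the reverse inclusion, take $\xi \in R(n) \cap (R_d \otimes \QQ)$ and represent it by $P \in \ZZ[x]$ with $\deg P < n$. The hypothesis forces $P \equiv 0 \pmod{\Phi_e}$ in $\ZZ[x]$ for every $e \neq d$ dividing $n$. Since the $\Phi_e$ are pairwise coprime monic polynomials in $\ZZ[x]$, their product $Q_n/\Phi_d$ divides $P$ in $\ZZ[x]$; writing $P = (Q_n/\Phi_d)\, B$ with $B \in \ZZ[x]$ exhibits $\xi$ as $j([B])$ with $[B] \in R_d$. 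The main technical point of the whole argument is ensuring that the last divisibility holds inside $\ZZ[x]$ and not merely inside $\QQ[x]$, which is settled by monicity of the $\Phi_e$; the rest is bookkeeping on congruences dictated by the factorization of $Q_n$.
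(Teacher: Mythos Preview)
Your proof is correct and follows essentially the same route as the paper's: for (1) both arguments note that $Q_n/\Phi_d \equiv 0$ in $R_{d'}$ for $d' \neq d$, and for (2) both identify $R(n) \cap (R_d \otimes \QQ)$ with the kernel of $R(n) \to \oplus_{d'\neq d} R_{d'}$, i.e.\ the principal ideal generated by $Q_n/\Phi_d$. You spell out more of the details (well-definedness of $j$, injectivity of $\psi$, and the divisibility argument in $\ZZ[x]$ via monicity and unique factorization) that the paper leaves implicit.
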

\begin{proof}
1) means that  $\psi (R_d) \subset R_d$, which follows since $ Q_n / \Phi_d \equiv 0 \in R_{d'}$ for $d' \neq d$. 

2) follows since $R(n) \cap (R_d \otimes \QQ)$ is the kernel of $R(n) \ra \oplus_{d' | n, d' \neq d} R_{d'}$,
hence it is an $R(n)$-module, hence an ideal in $R(n)$: and it must be the ideal generated by $\Pi_{d' | n, d' \neq d} \Phi_{d'}= Q_n / \Phi_d$.

\end{proof}

 We also notice that $ \psi \otimes \QQ$ is an isomorphism, and that we have a surjection
 $$  Coker (\psi) \ra Coker (i)$$
 in view of the injective maps
 $$ j :  R'(n) \ra R(n), i : R(n)  \ra R'(n)  $$
 whose composition is $\psi$.
 
 $  Coker (\psi)$ is essentially the double of $   Coker (i)$, since 
 $$ Coker (\psi) = \oplus_{d|n} R_d /( Q_n / \Phi_d ) = \oplus_{d|n} \ZZ[x] /( \Phi_d, Q_n / \Phi_d ) = \oplus_{d|n} \ZZ[x] /( \Phi_d, \Pi_{d' \neq d} \Phi_d' ) .$$
 
 Now, $$  R_d /(  \Pi_{d' \neq d} \Phi_d' ) $$
 is by Corollary \ref{cor4} isomorphic to the finite ring
 $$   \oplus_{d'|n , d' \neq d} R_{d,d'}.$$

 Therefore, we have the surjection:
  $$ Coker (\psi) = \oplus_{d|n} R_d /(  \Pi_{d' \neq d} \Phi_d' )  \ra Coker (i) = \oplus_{d,d' | n ,\  d <  d'} R_{d,d'}$$ 
and an exact sequence

$$ 0 \ra Coker (j) \ra Coker (\psi) \ra Coker (i) \ra 0,$$
and this shows that we have an isomorphism $$  Coker (j) \cong  Coker (i)= \oplus_{d,d' | n ,\  d <  d'} R_{d,d'}.$$

We can summarize everything in the following

\begin{prop}\label{direct sum}
We have a sequence  of inclusions:
$$ 0 \ra  M' : =  \oplus_{d|n} ( Q_n / \Phi_d ) R_d  \ra R(n) \ra R' : =  \oplus_{d|n} R_d  ,$$
such that $$R^0(n) : = R(n) / M' \subset R' / M' =  \oplus_{d,d' | n ,\  d \neq   d'}  R_{d,d'} $$
is identified as the submodule of $R' / M'$, 
$$R^0(n) = \{ (a_{d,d'})  | (a_{d,d'}) = (a_{d',d}) \}.$$ 
\end{prop}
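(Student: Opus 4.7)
The plan is to identify each term of the three-term exact sequence
$$ 0 \to \operatorname{Coker}(j) \to \operatorname{Coker}(\psi) \to \operatorname{Coker}(i) \to 0 $$
established in the paragraph immediately preceding the statement, and then trace the symmetry condition through those identifications. The first assertion — that $0 \to M' \to R(n) \to R'$ is a chain of inclusions — follows immediately from the injectivity of $j$ (established in the previous proposition) and of $i$ (clear since $\psi = i\circ j$ is injective). Because $M' = j(R')$ and $\psi = i\circ j$, this chain of inclusions identifies $R^0(n) = R(n)/M' = \operatorname{Coker}(j)$ with the kernel of the quotient map $R'/M' \twoheadrightarrow R'/R(n)$.

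Next I would spell out the isomorphism $R'/M' \cong \oplus_{d\neq d'} R_{d,d'}$. The $d$-th summand of $R'/M'$ is
$$R_d/(Q_n/\Phi_d)R_d \;=\; R_d/\bigl(\textstyle\prod_{d'\neq d}\Phi_{d'}\bigr)R_d;$$
since the cyclotomic polynomials $\Phi_{d'}$ are pairwise coprime in $\ZZ[x]$, Corollary \ref{cor4} applied inside $R_d$ gives
$$ R_d/(Q_n/\Phi_d)R_d \;\cong\; \oplus_{d'\neq d} R_{d,d'}, \qquad a_d \mapsto \bigl(a_d \bmod (\Phi_d,\Phi_{d'})\bigr)_{d'\neq d}. $$
Summing over $d$, an element $(a_d)_{d|n} \in R'$ is represented in $\oplus_{d\neq d'} R_{d,d'}$ by the tuple $(a_{d,d'})$ with $a_{d,d'} := a_d \bmod (\Phi_d,\Phi_{d'})$.

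Finally, Theorem \ref{Factorial} identifies $\operatorname{Coker}(i) \cong \oplus_{d<d'} R_{d,d'}$ with the projection $R' \to \operatorname{Coker}(i)$ given by $(a_d) \mapsto (a_d - a_{d'})_{d<d'}$. Under the identification of $R'/M'$ just described, this projection becomes
$$ (a_{d,d'})_{d\neq d'} \;\longmapsto\; (a_{d,d'} - a_{d',d})_{d<d'}, $$
where we use that $R_{d,d'}$ and $R_{d',d}$ are literally the same ring $\ZZ[x]/(\Phi_d,\Phi_{d'})$. Its kernel is therefore precisely the symmetric submodule $\{(a_{d,d'}) : a_{d,d'} = a_{d',d}\}$, and this is $R^0(n)$.

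The step I expect to be most delicate is bookkeeping: one must verify that the isomorphism of Corollary \ref{cor4} is compatible with the reduction map supplied by Theorem \ref{Factorial}, which reduces to checking the commutativity of an evident diagram of successive reductions modulo nested ideals, and one must keep the ordered/unordered pair conventions consistent so that the symmetry condition $a_{d,d'} = a_{d',d}$ is well posed.
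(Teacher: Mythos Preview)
Your proof is correct and follows essentially the same approach as the paper: the preceding discussion already establishes the chain of inclusions and the identification $R'/M' \cong \oplus_{d\neq d'} R_{d,d'}$ via Corollary \ref{cor4}, so the paper's proof reduces to the single observation that $R(n)$ is the kernel of the difference map $(b_d) \mapsto (b_d - b_{d'})_{d<d'}$ from Theorem \ref{Factorial}, which is exactly your final paragraph. Your explicit bookkeeping of the compatibility between the Corollary \ref{cor4} isomorphism and the Theorem \ref{Factorial} map is a welcome clarification of a point the paper leaves implicit.
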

\begin{proof}
There remains only to prove the last assertion, which follows immediately from the observation that $R(n)$
is the kernel of the map to $\oplus_{d,d' | n ,\  d <  d'} R_{d,d'}$, given by taking differences $b_d - b_{d'}$
of the image of $b \in R(n)$ to $R_d$.

\end{proof}

\begin{remark}\label{n=6}
Let us go back to example \ref{n=6}, where the divisors of $n=6$ are $1,2,3,6$ and the only nonzero $R_{d,d'}$ 's are: 
\begin{itemize}
\item
$R_{1,2} = \ZZ/2$ acted trivially by $x$, since  $ x \equiv 1$,
\item
$R_{1,3} = \ZZ/3$ acted trivially by $x$, since  $ x \equiv 1$,
\item
$R_{2,6} = \ZZ/3$, where  $x$ acts  multiplying by $-1$, since $ x+1 \equiv 0$,
\item
$R_{3,6} = \FF_4$, with $\ZZ/2$ basis $1,x$ and with $x^2 \equiv  1 + x$.
\end{itemize}
We can now apply the method of proposition \ref{recipe} to construct many $\Lam^0 \subset R' / M' =  \oplus_{d,d' | n ,\  d \neq   d'}  R_{d,d'}.$

For instance, we may take 
$$\Lam^0 = \{ (a_{d,d'}) |   a_{1,2}= a_{2,1}= 0, a_{1,3}= a_{3,1},  a_{2,6}= a_{6,2}, a_{3,6}= a_{6,3} = 0 \},$$
and we get a module different from $R(6)$. 
\end{remark}

\section {Fully ramified cyclic coverings of the projective line and associated Hodge structures}

Let $f : C \ra \PP^1$ be a cyclic covering with Galois group $$\mu_n = \{ \ze \in \CC | \ze^n=1\} \cong  \ZZ/n,$$ branched on $k+2$ points $P_0 = 0 , P_1 =1, P_2, \dots , P_k, \infty$,
and let us  assume that 
 $C$ is the normalization of the affine curve  of equation
$$ y^n = x^{ m_0} (x-1)^{ m_1} (x- t_2)^{ m_2} \dots  (x- t_k)^{ m_k},$$
where  $P_i = \{ x = t_i \}$ and where without loss of generality we may assume $ 1 \leq m_j < n$.

 The local monodromy of the covering around the point $P_i$
sends the standard local generator to the element $ m_i \in \ZZ/n$, hence  the inertia group of $P_i$ is cyclic of order 
$r_i = 
\frac{n}{ G.C.D.(n, m_i)}$.

\begin{defin}
One says that a Galois covering is {\bf fully ramified}  if there is a  branch point whose inverse image consists of only one point.

In the case of curves, this implies that the Galois covering is cyclic with group $ \ZZ/n$,  and there is a branch point $P_i$
with $r_i = n$. 
\end{defin}

In  the following, we shall make the assumption that $f : C \ra \PP^1$ is fully ramified, and without loss of generality 
we may assume  that $m_0 = 1$.

Our goal, in this section, is to describe $H_1(C, \ZZ)$ as an $R(n) = \ZZ [\mu_n]$-module.

Recall that the fundamental group $\pi_1(C)$ is the kernel of the following exact sequence (see for instance \cite{cime}, pages 101-104):

$$ 1 \ra \pi_1(C) \ra \sT : = T (0;  r _0, r_1, \dots, r_k, r_{\infty}) \ra \ZZ/n \ra 0,$$
where the polygonal orbifold group $\sT : = T (0;  r _0, r_1, \dots, r_k, r_{\infty})$ has generators

$$ \ga_0, \ga_1, \dots , \ga_k, \ga_{\infty}$$
and relations
$$ \ga_i^{r_i} = 1, \forall i, \ \ga_0 \cdot  \ga_1\cdot \dots \cdot \ga_k \cdot \ga_{\infty} = 1.$$

 Breaking the symmetry, we shall see  $ \ga_0, \ga_1, \dots \ga_k$ as generators for $\sT$,
 and $ \ga_0^n = 1, \ga_1^{r_1} = 1, \dots ,   \ga_k^{r_k} = 1, ( \ga_0 \cdot  \ga_1\cdot \dots \cdot \ga_k)^{r_{\infty}} = 1$
 as relations.
 
One sees then  immediately that $\pi_1(C)$ is generated by $1 + n (k)$  elements, and, since $\ga_0, \ga_0^2, \dots, \ga_0^{n-1}$
  is a Schreier system, we can choose, by the Reidemeister Schreier method (\cite{mks}, theorem 2.7, page 89 and following) the following generators:
$$ \ga_0^n , \ \de_{i,j} : = \ga_0^i \ga_j \ga_0^{-m_j -i},  i = 0, \dots, n-1, j = 1 , \dots, k.$$

These generators are nice because the Galois group $\ZZ/n$ acts on  $\pi_1(C)$ by conjugation of a lift of
$i \in \ZZ/n$, hence by conjugation by  $\ga_0^i$. Hence these elements are permuted by the Galois group.

It is obvious that we can forget about the first generator $ \ga_0^n$ in view of the relation $ \ga_0^n=1$. 

The Hurwitz formula calculates the genus $g$ of the curve $C$ as follows:
$$ 2g - 2 = n ( -2 + \sum_{i=0, \dots, k, \infty}  \frac{(r_i - 1)}{r_i} ) = \sum_{i=0, \dots, k, \infty} (n -  \frac{n }{r_i} ) - 2n ,$$
hence
$$ 2g  =  \sum_{i=1, \dots, k, \infty} (n -  \frac{n }{r_i} ) - n + 1=  \sum_{i=1, \dots, k} (n -  \frac{n }{r_i} ) + 1 - \frac{n }{r_{\infty}} $$

We shall further reduce the number of generators  using the other relations, until we reach $2g$ generators: the classes
of these in $H_1(C, \ZZ) = \pi_1(C)^{ab}$ will then give a basis for the first homology group.

Indeed, we can rewrite:
$$ 1 = \ga_j^{r_j} =  \ga_0^i \ga_j^{r_j} \ga_0^{-i} = \de_{i,j}  \de_{i + m_j , \ j}  \de_{i + 2 m_j , \ j} \dots  \de_{i + (r_j -1) m_j , \ j}.$$ 

{\bf Case $r_{\infty} = n$:} We can eliminate in this way, since $(m_j ) \subset \ZZ/n$ equals $(n/r_j) \subset \ZZ/n$, 
$\sum_{i=1, \dots, k}  \frac{n }{r_i} $ generators, and we obtain the right number of generators, $ \sum_{i=1, \dots, k} (n -  \frac{n }{r_i} )  = 2g$
generators. The rewriting of the relations coming from $\ga_{\infty}^{r_{\infty}} = 1$ yields the standard relation for $\pi_1(C)$.

\begin{defin}
Define $D_{i,j}$ as the class of  $\de_{i,j}$ inside $H_1(C, \ZZ)$, for $ i= 0, \dots, n-1, $ and $ j=1, \dots k$.

\end{defin}

Then the previous relation rewrites as

$$ \sum_{h=0, \dots, r_j -1} D_{ i + h n/r_j , j} = 0.$$

We have therefore proven

\begin{thm}\label{2totram}
Assume that we have a cyclic covering $ f : C \ra \PP^1$ with group $\ZZ/n$ and with two points of full ramification.

Then, if the order of the inertia groups are $n,r_1, \dots , r_k, n$,  the $\ZZ[x] $-module  $H_1(C, \ZZ)$ is a direct sum of
cyclic  modules, 
$$H_1(C, \ZZ) = \oplus_1^k  ( \ZZ[x] / ( 1 + x^{n/r_j}  + x^{2n/r_j} + \dots + x^{(r_j-1)n/ r_j}) .$$\end{thm}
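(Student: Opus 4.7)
The plan is to use the Reidemeister--Schreier presentation of $\pi_1(C)$ already constructed before the theorem, abelianize to obtain a presentation of $H_1(C,\ZZ)$, and identify the resulting $\ZZ[x]$-module with the claimed direct sum.

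First, I would pin down the $\ZZ[x]$-action on the generators $D_{i,j}$. The Galois group $\ZZ/n = \sT/\pi_1(C)$ acts on $\pi_1(C)$ by conjugation by the lift $\gamma_0$, and a short direct computation gives
$$\gamma_0\, \delta_{i,j}\, \gamma_0^{-1} \;=\; \gamma_0^{i+1}\gamma_j\gamma_0^{-m_j - (i+1)} \;=\; \delta_{i+1,\,j},$$
so on $H_1(C,\ZZ)$ we have $x \cdot D_{i,j} = D_{i+1,\,j}$ with indices mod $n$. In particular, for each fixed $j$ the classes $\{D_{i,j}\}_{i\in\ZZ/n}$ form the $\ZZ[x]$-orbit of $D_{0,j}$, and $D_{0,1},\dots,D_{0,k}$ generate $H_1(C,\ZZ)$ as an $R(n)$-module.

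Set $p_j(x) := 1 + x^{n/r_j} + x^{2n/r_j} + \cdots + x^{(r_j-1)n/r_j}$. The abelianized relation $\sum_{h} D_{i + h\,n/r_j,\,j} = 0$ displayed just before the statement becomes $x^i\, p_j(x)\cdot D_{0,j} = 0$; as $i$ varies this says that $p_j(x)$ annihilates the $j$-th cyclic submodule. The factorization $(x^{n/r_j}-1)\,p_j(x) = x^n - 1 = Q_n$ shows $p_j \mid Q_n$, so that $\ZZ[x]/(p_j(x))$ is automatically an $R(n)$-module. Hence there is a well-defined surjective $R(n)$-homomorphism
$$\Phi : \bigoplus_{j=1}^{k}\, \ZZ[x]/(p_j(x)) \;\twoheadrightarrow\; H_1(C,\ZZ),$$
sending the class of $1$ in the $j$-th summand to $D_{0,j}$.

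To upgrade $\Phi$ to an isomorphism I would compare $\ZZ$-ranks. The left-hand side has rank
$$\sum_{j=1}^{k}\deg p_j(x) \;=\; \sum_{j=1}^{k}(r_j-1)\cdot\frac{n}{r_j} \;=\; \sum_{j=1}^{k}\!\left(n - \frac{n}{r_j}\right),$$
which under the full ramification hypothesis $r_\infty = n$ equals $2g$ by the Hurwitz formula recalled in the text. Since $H_1(C,\ZZ) \cong \ZZ^{2g}$ is free abelian of the same rank, and any surjection between free abelian groups of the same finite rank must be an isomorphism (e.g.\ via Smith normal form), $\Phi$ is an isomorphism. The one bookkeeping step to verify --- and in my view the only genuinely nontrivial obstacle --- is that no extra relation among the $D_{i,j}$ has been missed: the leftover orbifold relation $\gamma_\infty^{r_\infty} = 1$, once rewritten via Reidemeister--Schreier in the $\delta_{i,j}$, must reduce to the standard surface-group product-of-commutators relation, hence become trivial in $H_1$. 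This is exactly the place where the hypothesis of full ramification at a second point ($r_\infty = n$) enters, by making the rank count match on the nose. Granting this, the claimed direct sum decomposition follows.
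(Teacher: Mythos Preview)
Your proposal is correct and follows essentially the same route as the paper: the Reidemeister--Schreier generators $\de_{i,j}$, the action $x\cdot D_{i,j}=D_{i+1,j}$, and the abelianized relations $p_j(x)D_{0,j}=0$ are exactly the paper's setup, and the identification of the summands as $\ZZ[x]/(p_j)$ is the intended conclusion.

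One remark on your final paragraph: your rank argument already \emph{proves} the ``bookkeeping step'' you flag as outstanding, so that paragraph is redundant (and slightly self-contradictory). Once you know $\Phi$ is a surjection from a free abelian group of rank $\sum_j(n-n/r_j)$ onto $H_1(C,\ZZ)\cong\ZZ^{2g}$, and Hurwitz with $r_\infty=n$ gives equality of ranks, injectivity is forced; the vanishing in $H_1$ of the rewritten relation $\ga_\infty^n=1$ is then a \emph{consequence}, not an additional hypothesis to check. In this respect your argument is actually a mild streamlining of the paper's, which at this point simply asserts that the last relation rewrites as the standard surface relation (the detailed rewriting is deferred to Section~6) and hence dies in the abelianization.
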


\begin{ex}\label{domingo}

This example is borrowed from the modular description of the Cartwright-Steger surface, which was 
first  explained to me by Domingo Toledo.

Assume that we have $n=12$, and $ m_0 = 7, m_1=m_2= m_3=2, m_{\infty} = 11$, hence ramification indices 
$(12,6,6,6,12)$. 

Then $ H_1(C, \ZZ) \cong  \oplus_1^6 \ZZ[x] /(x^{10} + x^8 + \dots  + 1).$

Since $x^{10} + x^8 + \dots  + 1 = (x^{12} - 1) / (x^2-1) =  \Phi_3 \Phi_4  \Phi_6 \Phi_{12}$, 
we see that, by the previous results,  setting 
$$M : =  \ZZ[x] /(x^{10} + x^8 + \dots  + 1),$$
we have $ H_1(C, \ZZ) \cong 3 M$, 
and 

$$ M \supset   \Phi_4  \Phi_6 \Phi_{12} M  \oplus   \Phi_3   \Phi_6 \Phi_{12} M  \oplus   \Phi_3   \Phi_4 \Phi_{12} M \oplus \Phi_3 \Phi_4  \Phi_6 M \cong R_3 \oplus R_4 \oplus R_6 \oplus R_{12}.$$ 

An easy calculation shows that the $i$-th dimensional eigenspace $V_j : = H^0(\Omega^1_C)^j$ has dimension $v(j)$ with: 
$$v(j)=  0,{\rm for} \  j=0,5,6, \ v(j)=  2, {\rm for} \ j=1,2,8,9, \ v(j)=  1, {\rm for} \  j=3,4,7,10,11.$$ 

Now, we consider the cohomology $H^1(C, \ZZ)$, again as a $\ZZ[x]$-module. 

Then $H^1(C, \ZZ) = 3 \ Hom( M, \ZZ),$ and by the exact sequence
$$ 0 \ra M \ra \sum_{d=3,4,6,12} M / \Phi_d M = M  \otimes_{\ZZ[x]} R_d = R_d \ra Coker \ra 0,   $$
we find that $Hom( M, \ZZ)$ contains $$\sum_{d=3,4,6,12} Hom (M / \Phi_d M, \ZZ) =  \sum_{d=3,4,6,12} Hom (R_d , \ZZ) \cong  \sum_{d=3,4,6,12} R_d,$$
where the duality $R_d \times R_d \ra \ZZ$ is given by the product followed by the trace map.
\end{ex}

\begin{remark}
Theorem \ref{2totram} works if there are two points of full ramification; if there is exactly one, we are in the situation of 

{\bf Case 2: $r_1, r_2, \dots r_k , r_{\infty} < n$.} 

Here we can  use the relation
$$( \ga_0 \cdot  \ga_1\cdot \dots \cdot \ga_k )^{r_{\infty}} = 1,$$
to  rewrite
$$1 = \ga_0^i ( \ga_0 \cdot  \ga_1\cdot \dots \cdot \ga_k )^{r_{\infty}} \ga_0^{-i} = $$
$$ = \de_{i+1, 1} \cdot  \de_{i+1 + m_1 , 2} \cdot   \dots  \de_{i+1 + m_1 + \dots + m_{k-1} , k} \ \de_{i +1 - m_{\infty}, 1} \dots \de_{i -m_k  , k} =1.$$

These, since $\ZZ/ (m_{\infty}) = \ZZ/ (n/r_{\infty})$,   are $n/r_{\infty}$ relations, as expected.

Passing to the Abelianization of $\pi_1(C)$, we get the extra relations:

$$ \sum_{h=0, \dots r_{\infty}-1} D_{i+1 + h n/r_{\infty},1} +  D_{i+1 + m_1 + h n/r_{\infty},2}  + \dots + D_{ i - m_k +  h n/r_{\infty},k} = 0.$$
This case should be easier to treat than the general one with no points of full ramification.
\end{remark}

\section{Structure Theorem for Bagnera-De Franchis Manifolds}

The goal of this section is to give a complete structure theorem for Bagnera-De Franchis  Manifolds, leaving aside the question of
projectivity, which was treated in \cite{ccd} (in the appendix it was shown that each BdF Manifold deforms to a projective one).

Set in this section $ G = \ZZ/n$ and consider a Bagnera-De Franchis  Manifold $X = A/G$, where $A$ is a complex torus $ A = V / \Lam$
of dimension $g$. Here  we use the letter $A$ even if we have a torus, and not necessarily an Abelian variety, just  in order to have
a similar notation to \cite{topmethods} (where the letter $T$ was used to denote some torsion subgroup).

Holomorphic maps $ F : A \ra A'$ of complex tori are affine maps, since their derivatives in the flat uniformizing parameters are constant:
hence  such holomorphic maps 
$$ F : A = V / \Lam  \ra A' = V' / \Lam' $$
can be represented  as 
$$ F(v) = \al (v) + b \ ( mod \  \Lam') , \al : V \ra V', \al ( \Lam)  \subset  \Lam' , b \in V'.$$
$\al$ is a linear map of vector spaces  induced by $\al |_{\Lam}$, which we still denote by $\al$: indeed, any $\ZZ$-linear map $\al :  ( \Lam)  \ra \Lam' $
induces a complex linear map
$$ \al \otimes \CC :  \Lam \otimes \CC  = V \oplus \overline{V} \ra  \Lam' \otimes \CC  = V' \oplus \overline{V'},$$
and $\al$ induces a homomorphism of complex tori if and only if  
$$ (\al \otimes \CC ) (V) \subset V',$$
i.e. $ \al \otimes \CC$ is a homomorphism of  Hodge structures.

We take now a generator $\ga \in G$, and write $ \ga(v) = \al (v) + b$ (here $A'= A$).

Then we have a decomposition $ V = \oplus_{\ze \in \mu_n} V_{\ze}$, where $V_{\ze}$ is the eigenspace for the complex linear map
$\al$ corresponding to the eigenvalue $\ze$.

The condition that $\ga$ has no fixed point means that there is no solution of the equation
$$ \al (v) + b \equiv v ( mod \Lam) \Leftrightarrow (\al - Id) (v)  + b \in \Lam.$$

Writing $V = V_1 \oplus V_2 , V_2 : =  \oplus_{\ze \in \mu_n, \ze \neq 1} V_{\ze}$, we have that $(\al - Id)$ is invertible on $V_2$,
hence after a change of the origin we may assume that $b \in V_1$, and that
$$ \ga (v_1, v_2 ) = (v_1 + b_1, \al_2 (v_2)).$$
The condition that $G$ operates freely on $A$ amounts to: 
$$ (**) \ \exists \ (\la_1, \la_2)  \in \Lam \  {\rm such \ that} \   h b_1  = \la_1 \Leftrightarrow n | h.$$

$\al$ makes $\Lam$ an $R(n)$ module, hence we have (compare   the notation introduced just before Proposition \ref{recipe})  the decomposition 
$\Lam \otimes \QQ =  \oplus _{d|n} \Lam_{d, \QQ},$  
and setting $$\Lam_d : = \Lam \cap  \Lam_{d, \QQ}, \   \Lam^0  : = \Lam / \oplus _{d|n} \Lam_d , \ A_d : = \Lam_{d, \RR}/ \Lam_d$$

 we have an exact sequence
$$ 0 \ra \Lam^0 \ra   A' : = \oplus _{d|n} A_d \ra A \ra 0.$$

\begin{thm}\label{BdF}

A Bagnera-De Franchis Manifold with group $G = \ZZ/n$  is completely determined by the following data:

\begin{enumerate}
\item
 the datum of 
torsion free $R_d$-modules $\Lam_d$ of finite rank, for all $d | n$, such that  $\Lam_1 \neq 0$ and with $\Lam_1, \Lam_2$ of even rank;
\item
the datum of a finite subgroup $\Lam^0 \subset A' : =   \oplus _{d|n} A_d$, where $A_d : = \Lam_{d, \RR}/ \Lam_d$,
\item
an element $\be_1 \in A_1$  generating a subgroup $\langle \be_1 \rangle$ of order exactly $n$,
such that:
\item
(A) $\Lam^0$ is stable for multiplication by the element $x$ of the subring $R(n) \subset R'(n) : =  \oplus _{d|n} R_d $,
and
\item
(B) $\Lam^0 \cap A_d = 0$ $\forall d|n$,
\item
(C) the projection of $\Lam^0$ into $A_1$   intersects  the subgroup $\langle \be_1 \rangle$ only in $0$;
\item
the datum of a complex structure on each $\Lam_d \otimes \CC $,  i.e., a Hodge decomposition
$$ \Lam_d \otimes \CC  = V(d) \oplus \overline{V(d)}, $$
which allows to decompose $V(d) = \oplus_{j < d, (j,d)=1} V_j$ as a direct sum of eigenspaces for the action $\al$ of $x$.
\item
The properties (A) and (B) imply  that $\Lam^0 \subset \oplus_{d | n}  ( \frac{\Phi_d}{Q_n} \Lam_d ) / \Lam_d $,
hence, in particular, the number of such subgroups $\Lam^0$ is finite.

\end{enumerate}

\end{thm}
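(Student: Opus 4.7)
The plan is to verify both directions of the characterization by unpacking the setup preceding the theorem and invoking Proposition~\ref{recipe} to translate the $R(n)$-module structure on $\Lam$ into the combinatorial data (1), (2), (4), (5), (8); the remaining content is the Hodge decomposition (7) and the free-action data (3), (6).

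For the forward direction, start from a BdF manifold $X = (V/\Lam)/G$, pick a generator $\ga$ of $G = \ZZ/n$, and write $\ga(v) = \al(v) + b$ with $\al \in \GL_\CC(V)$ of order $n$ preserving $\Lam$. Decomposing $V = V_1 \oplus V_2$ along $\al$-eigenspaces and translating the origin by a suitable element of $V_2$ (using invertibility of $\al - \Id$ on $V_2$), reduce to $b = b_1 \in V_1$. The $\al$-action makes $\Lam$ an $R(n)$-module, and Proposition~\ref{recipe} then produces torsion-free $R_d$-modules $\Lam_d$ and a finite subgroup $\Lam^0 \subset A' = \oplus_{d|n} A_d$ satisfying (4), (5), (8). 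Since $\al$ is $\CC$-linear, the Hodge decomposition $V \oplus \overline V = \Lam \otimes \CC$ respects the $\al$-eigenspace decomposition, yielding (7) via $V(d) := V \cap (\Lam_d \otimes \CC)$. Setting $\be_1 := b_1 \pmod{\Lam_1} \in A_1$ supplies (3). The remaining constraints in (1) follow immediately: $\Lam_1 \ne 0$ because $b_1 \ne 0$ (else $\ga$ fixes the origin) and $n b_1 \in \Lam_1$; evenness of $\rk \Lam_1$ and $\rk \Lam_2$ because the eigenvalue of $\al$ on $\Lam_{d,\CC}$ is real ($\pm 1$) for $d = 1, 2$, forcing $V(d) \cap \overline{V(d)} \subset V \cap \overline V = 0$ and hence $\rk \Lam_d = 2 \dim_\CC V(d)$. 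Condition (6) then follows from the free-action condition $(**)$ via the identification described in the final paragraph.

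For the backward direction, given data (1)--(7), apply Proposition~\ref{recipe} to reconstruct $\Lam$ as an $R(n)$-module; assemble $V = \oplus_{d|n} V(d)$ from (7); form the complex torus $A = V/\Lam$; let $\al$ act on $V$ as the $\CC$-linear extension of multiplication by $x$; lift $\be_1$ to some $b_1 \in V_1$; and define $\ga(v) = \al(v) + b_1$. Then $\ga^n = \Id$ on $A$ because $n b_1 \in \Lam_1 \subset \Lam$. For $0 < h < n$, any fixed point of $\ga^h$ in $A$ would require some $\la \in \Lam$ with $p_1(\la) = h b_1$, obtained by projecting the fixed-point equation $(\al^h - \Id) v + h b_1 \in \Lam$ onto $V_1$ (where $\al^h - \Id$ vanishes); this forces $h b_1 \in p_1(\Lam)$, which condition (6) precisely excludes. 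Hence $X := A/G$ is a BdF manifold with group $\ZZ/n$.

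The main obstacle is the clean translation between the free-action condition $(**)$ and condition (6). The key identification is $p_1(\Lam^0) = p_1(\Lam)/\Lam_1$ inside $A_1$: since $\Lam^0 = \Lam/(\oplus_{d|n} \Lam_d)$ sits inside $A' = \oplus_{d|n} A_d$, its $A_1$-projection is exactly $p_1(\Lam)/\Lam_1 \subset \Lam_{1,\RR}/\Lam_1 = A_1$. Combined with $\be_1$ having order exactly $n$, condition (6) reads ``$h b_1 \in p_1(\Lam)$ implies $n \mid h$'', which is $(**)$. Everything else is essentially bookkeeping: Proposition~\ref{recipe} carries the algebraic content, the Hodge decomposition analysis is standard, and the finiteness asserted in (8) is already established in Proposition~\ref{recipe}.
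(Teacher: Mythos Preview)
Your proposal is correct and follows essentially the same approach as the paper's proof: both invoke Proposition~\ref{recipe} to handle the $R(n)$-module structure on $\Lam$ (giving items (1), (2), (4), (5), (8)), treat the even-rank constraint via the reality of the eigenvalues $\pm 1$, and reduce the free-action condition to the statement that $h b_1$ avoids the $V_1$-projection of $\Lam$. Your write-up is somewhat more explicitly bidirectional than the paper's, and your phrasing of the fixed-point analysis (projecting onto $V_1$, where $\al^h - \Id$ vanishes) is arguably cleaner than the paper's assertion that the image of $\al^h - \Id$ equals $V_2$; but the substance is the same, and your explicit identification $p_1(\Lam^0) = p_1(\Lam)/\Lam_1$ is exactly the step the paper leaves implicit.
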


\begin{proof}

According to Proposition \ref{recipe} the data (1) and (2), provided that (4) and (5) hold, determine a lattice $\Lam$ which is an $R(n)$-module.

The conditions in (1) that $\Lam_1, \Lam_2$ have  even rank are necessary  for the existence of a complex structure on $ \Lam_d \otimes \CC  $
for $d=1,2$. 

We can then choose  $V_1 = V(1)$ to give a Hodge structure on $\Lam_1 \otimes \CC$ and, if $n$ is even, $ V(2) = V_{n/2}$ 
to give a complex structure on $\Lam_2 \otimes \CC$.

 Whereas, for $ d \geq 3$, $ \Lam_d \otimes \CC$ splits as a direct sum of eigenspaces $W_j$, corresponding to
the eigenvalues $\e_n^{j n/d}$, for $j < d, (j,d) = 1$. 

These eigenvalues come in conjugate pairs, hence it suffices to consider $W_j \oplus W_{n-j}$
and choose $V_j $, for $ j < d / 2$, to be any subspace of $W_j$, letting then $V_{n-j} \subset W_{n-j}$ be a subspace such that 
$W_j = V_j  \oplus \overline{V_{n-j}  }$. 

We are done, since $W_{n-j} = \overline{W_{j}  }$.

Finally, we take the transformation $\ga$ whose linear part is the linear map $\al$ corresponding to multiplication by $x$,
and whose translation part is $
\be_1 \in A_1$. The condition that any power of $\ga$ with exponent $ h <n$ has no fixed points means (see $(**)$) that  
the equation $ (\ga^h (v) - v) \in \Lam$   has no solutions $v \in V$. Let $\be_1$ be the class of $b \in V_1$:
then this equation is equal to
$$h b + (\al^h - Id)(v) \in \Lam.$$
Since the image of  $ (\al^h - Id)$ equals to $V_2$, this means that $h b$ does not belong to the projection
of $\Lam$ into $V_1$. This is equivalent to requiring that $h \be_1$  does not belong to the projection of $\Lam^0$ into $A_1$.

Property (8) was already shown in Proposition \ref{recipe}.

\end{proof}

\begin{remark}

(I) 
To relate the  formulation given here with the content of Proposition 16 of \cite{topmethods}, it suffices to define
$\Lam_2 : = \Lam \cap V_2$, and $T : = \Lam / (\Lam_1 + \Lam_2)$. Then $T$ is isomorphic to the image of
$\Lam^0$ inside $A_1$, which was called $T_1$ in loc. cit. . Hence one requires $T_1$ and $\langle \be_1 \rangle$
to intersect only in $0$, and clearly $ X = A / G = (A_1 \times A_2 )/ G \times T$.

(II) On page 313, eight lines from the bottom  of  \cite{topmethods} there is a `lapsus calami', asserting 
the splitting $ R(m) = \oplus _{d|m} R_d$ without tensoring with $\QQ$. However, fortunately,  this wrong assertion 
is not used at all  in \cite{topmethods}.

\end{remark}

\bigskip

\section{The intersection  product for the homology of fully ramified cyclic coverings of the line}

In this section we use the presentation of the fundamental group of a cyclic covering $ f : C \ra \PP^1$ as described in section 4,
and shall determine the intersection product map $H_1(C, \ZZ) \times H_1(C, \ZZ) \ra \ZZ$ dual to the cup product for the first homology group.

We shall make the assumption that the ramification indices  $r_0 = r_{\infty} = n$.

Then we have a set $\sG_1$  generators for $\pi_1(C)$,  consisting of the $n \cdot k$ elements:  

$$  \de_{i,j} : = \ga_0^i \ga_j \ga_0^{-m_j -i},  i = 0, \dots, n-1, j = 1 , \dots, k.$$

As we saw, the $\sum_{i=1, \dots, k}  \frac{n }{r_i} $ relations:
$$ 1 = \de_{i,j}  \de_{i + m_j , \ j}  \de_{i + 2 m_j , \ j} \dots  \de_{i + (r_j -1) m_j , \ j},$$
allow to eliminate $\sum_{i=1, \dots, k}  \frac{n }{r_i} $ of these generators, 
and we obtain a set $\sG_2$ of generators, with the right number  $ \sum_{i=1, \dots, k} (n -  \frac{n }{r_i} )  = 2g$
of elements.

It is convenient to eliminate the generators $  \de_{i,j} , \ i = 0, \dots  , \frac{n }{r_i} - 1$, and each of them is
then equal to the product 
$$ [ \de_{i + m_j , \ j}  \de_{i + 2 m_j , \ j} \dots  \de_{i + (r_j -1) m_j , \ j}]^{-1} =  \de_{i + (r_j -1) m_j , \ j}^{-1} \dots \de_{i + 2 m_j , \ j}^{-1}   \de_{i + m_j , \ j}^{-1}.$$

Observe that this is the product of exactly $(r_j-1)$ inverses of elements in the  set $\sG_2$ of $2g$  generators.

We consider now the relation which is the Reidemeister-Schreier rewriting of $\ga_{\infty} ^n = 1$.

We have:
$$1 = (\ga_0 \ga_1 \ga_2  \dots \ga_k)^n =$$
$$  (\ga_0 \ga_1\ga_0^{-1})  (\ga_0 \ga_2 \ga_0^{-1}) \dots (\ga_0 \ga_k \ga_0^{-1}) \cdot $$ 
$$  (\ga_0^2 \ga_1\ga_0^{-2})  (\ga_0^2 \ga_2 \ga_0^{-2}) \dots (\ga_0^2 \ga_k \ga_0^{-2}) \cdot $$ 
$$\dots  $$ 
$$  (\ga_0^{n-1} \ga_1\ga_0^{-(n-1)})  (\ga_0^{n-1} \ga_2 \ga_0^{-(n-1)}) \dots (\ga_0^{n-1} \ga_k \ga_0^{-(n-1)}) \cdot $$
$$ \ga_1 \ga_2 \dots \ga_k. $$

We are then ready to rewrite in terms of the `big' set $\sG_1$ of generators:
$$ 1= \de_{ 1, \ 1}  \dots \de_{ 1, \ k } \de_{ 2, \ 1}  \dots \de_{ 2, \ k } \dots \de_{ n-1, \ 1}  \dots \de_{ n-1, \ k } \de_{ 0, \ 1}  \dots \de_{ 0, \ k }.$$

This relation is then equal to the product of the $n \cdot k$ original generators. 

Now, we replace, as indicated above, the generators $  \de_{i,j} , \ i = 0, \dots  \frac{n }{r_i}$ 
by the respective products of inverses of the final set $\sG_2$ of $2g$  generators.

We are then left with a relation where do occur exactly the $2g$ generators in $\sG_2$ 
($2g = n k - \sum_{i=1, \dots, k}  \frac{n }{r_i} $)
with exponent equal to $+1$, and exactly 
$$ \sum_{i=1, \dots, k}  \frac{n }{r_i} (r_i-1) = \sum_{i=1, \dots, k} n -  \frac{n }{r_i}  = 2g$$ 
 different generators  
with exponent equal to $-1$.

Hence the relation, even if not in the standard form, depicts a 2-dimensional manifold obtained attaching a 2-disk to a bouquet of
$2g$ circles.

The recipe for the intersection product is then given in the following

\begin{prop}
Let $\pi_g $ be a group with $2g$ generators, $a_1, \dots a_{2g}$,
 and with only one relation $W (a_1, \dots ,a_{2g} ) = 1$, where the word 
 $W (a_1, \dots ,a_{2g} ) $ consists of exactly $4g$ letters, of which $2g$ are exactly the letters 
 $a_1, \dots a_{2g}$, and $2g$ are exactly the inverses of the letters 
 $a_1, \dots a_{2g}$.
 
 Then  $\pi_g $ is the fundamental group of a closed Riemann surface $C$ of dimension $g$, and the intersection 
 product on $H_1(C, \ZZ)  = \pi_g^{ab} $ is determined as follows, considering the word as giving a cyclical order 
 in the set (of cardinality $4g$) consisting of the generators $a_i$ and of  their inverses:
 \begin{enumerate}
 \item
 $ a_i \cdot a_j  = 0 $ if , removing $a_i$ and $a_i^{-1}$, $a_j$ and $a_j^{-1}$ lie in the same of the two remaining intervals;
 \item
  $ a_i \cdot a_j  = 1 $ if , removing $a_i$ and $a_i^{-1}$, $a_j$ lies in the interval going from $a_i$ to $a_i^{-1}$, and $a_j^{-1}$ lies
  in the other;
  \item
   $ a_i \cdot a_j  = - 1 $ if , removing $a_i$ and $a_i^{-1}$, $a_j^{-1}$ lies in the interval going from $a_i$ to $a_i^{-1}$, and $a_j$  lies
  in the other.

 \end{enumerate}

\end{prop}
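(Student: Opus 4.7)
The plan is to realize $\pi_g$ as $\pi_1(X)$, where $X$ is the 2-complex obtained by attaching a single $4g$-gon $D$ to a wedge of $2g$ oriented circles labelled $a_1,\dots,a_{2g}$, via the word $W$ read along $\partial D$. The first and most substantial step is to check that $X$ is itself a closed orientable surface of genus $g$, so that $X=C$ and the intersection form is then computable inside the polygon $D$.

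To verify that $X$ is a genus-$g$ surface, I would use the balanced-letter hypothesis: each $a_i$ appears once positively and once negatively in $W$, so the $4g$ boundary edges of $D$ are paired into $2g$ pairs, each pair identified by an orientation-reversing homeomorphism. This immediately gives a local manifold structure on $X$ away from the single vertex $v$, and simultaneously provides a coherent orientation. At $v$, the link is a graph whose vertex set is $\{(a_i,0),(a_i,1):1\le i\le 2g\}$ and whose edges come from the $4g$ corners of $D$; since each $a_i$ and each $a_i^{-1}$ appears exactly once in $W$, each link-vertex has valence $2$, so the link is a disjoint union of circles. Combined with $\chi(X)=1-2g+1=2-2g$ and the connectedness of $X$ (automatic in the application, where the 2-complex is obtained by the Reidemeister--Schreier rewriting of $\pi_1(C)$), this forces the link to be a single circle, so by the classification of surfaces $X$ is a closed orientable surface of genus $g$.

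Next I would compute the intersection form geometrically inside $D$. Represent each generator $a_i$ by a chord $\gamma_i\subset D$ joining the midpoints of the two sides carrying the labels $a_i$ and $a_i^{-1}$, oriented from the $a_i^{-1}$ side to the $a_i$ side so that its image in $C$ is freely homotopic to $a_i$. For $i\neq j$ the four endpoints of $\gamma_i,\gamma_j$ are four distinct midpoints on $\partial D$, and any two chords in a disk can be isotoped rel endpoints to disjoint arcs if and only if their endpoints do not interlink on the boundary circle. The non-interlinking situation is precisely case (1), yielding $a_i\cdot a_j=0$. When the four endpoints do interlink, the chords meet transversely in exactly one interior point of $D$, so $|a_i\cdot a_j|=1$, giving cases (2) and (3). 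For the sign I would orient $D$ compatibly with $C$, so that reading $W$ anticlockwise along $\partial D$ induces the cyclic order used in the statement, and then compute once and for all the local intersection sign by comparing the oriented tangent framing $(\dot\gamma_i,\dot\gamma_j)$ at the crossing point to the orientation of $D$. The two interlinking patterns $(a_i,a_j,a_i^{-1},a_j^{-1})$ and $(a_i,a_j^{-1},a_i^{-1},a_j)$ give opposite signs, matching (2) and (3).

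The main obstacle is the first step: ensuring that the vertex link is a single circle rather than a disjoint union of several. In the setting of Theorem~\ref{2totram} this is essentially free, because the presentation was obtained by Reidemeister--Schreier from $\pi_1(C)$ for a known closed surface $C$; in the abstract statement it follows from the vertex-link calculation together with the Euler characteristic identity, provided one assumes $X$ (equivalently the presentation 2-complex) is connected. A useful sanity check for the sign convention is the standard genus-$g$ word $W=[a_1,b_1]\cdots[a_g,b_g]$, where the rules (1)--(3) reproduce the familiar symplectic pairing $a_i\cdot b_i=1$, $a_i\cdot b_j=0$ for $i\neq j$, and $a_i\cdot a_j=b_i\cdot b_j=0$.
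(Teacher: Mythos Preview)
Your plan differs from the paper's: the paper argues locally at the single vertex $P$ of the bouquet, reading off how the two half-edges of $a_j$ sit relative to those of $a_i$ in a small disc around $P$, whereas you pass to chords $\gamma_i$ in the interior of the $4g$-gon $D$. There is however a genuine gap in the chord argument. The assertion that the image of $\gamma_i$ in $C$ is freely homotopic to $a_i$ is false: already for the torus word $W=aba^{-1}b^{-1}$, the chord $\gamma_a$ is the vertical diameter of the fundamental square and its image is freely homotopic to $b^{-1}$, not to $a$. In general, homotoping $\gamma_i$ rel endpoints to a boundary arc of $D$ shows that $[\gamma_i]$ equals the signed sum of the letters of $W$ lying strictly between the occurrences of $a_i$ and $a_i^{-1}$, which has no reason to be $[a_i]$. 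What your chords \emph{do} satisfy is $\gamma_i\cdot a_j=\delta_{ij}$ (each $\gamma_i$ meets only the edge-circle $a_i$, once, transversally at its midpoint), so $\{\gamma_i\}$ is the symplectically dual basis to $\{a_i\}$, with Gram matrix $-J^{-1}$ rather than $J$. These coincide only when $J^{2}=-I$, which is why your sanity check on the standard word passes; for $W=a_1a_2a_3a_4a_1^{-1}a_2^{-1}a_3^{-1}a_4^{-1}$ one has $J^{2}\neq -I$ and the two matrices differ.

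On the first step, the Euler-characteristic argument does not force the vertex link to be connected: the CW complex $X$ always has one $0$-cell, $2g$ $1$-cells and one $2$-cell, so $\chi(X)=2-2g$ regardless of the number of link components (for $W=a_1a_1^{-1}a_2a_2^{-1}$ the link has three components and $\pi_1(X)$ is free of rank $2$). You are right that in the paper's application this is automatic, since the presentation was obtained from an actual surface via Reidemeister--Schreier; the paper glosses over this point in the abstract statement as well.
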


\begin{proof}
Consider a bouquet of $2g$ circles meeting in one point $P$, and corresponding to the generators $a_1, \dots a_{2g}$.

We attach a 2-disk whose boundary is the word $W (a_1, \dots ,a_{2g} ) $. Since in the word each generator $a_i$ and 
its inverse $a_i^{-1}$ appear exactly once, the space $C$  that we obtain is 
smooth outside of $P$. At $P$ however we have $4g$ segments coming in, and we fill in $4g$ angles, hence $C$ is a topological
 manifold. It is also oriented since $H_2(C, \ZZ) = \ZZ$.
 
 The two incoming segments corresponding to $a_i$ and $a_i^{-1}$ (who are oriented)  locally separate $C$ in a neighbourhood of $P$.
 
 In case (1), two incoming segments corresponding to $a_j$ and $a_j^{-1}$ lie in the same half-plane, hence they can be deformed 
 out of $P$ until they do not intersect 
 $a_i$ at all. In cases (2) and (3), these segments lie in different halfplanes, hence the intersection product equals $\pm1$.
 
 In case (2) the intersection is positively oriented, in case (3) it is negatively oriented, as one sees easily (compare the standard
 presentation where the word $W (a_1, \dots ,a_{2g} ) = a_1 a_2 a_1^{-1} a_2^{-1} \dots$).

\end{proof}

\begin{ex}

Consider the Fermat elliptic curve $E$ 
of affine equation
$$ y^3 = x (x -1).$$

Here, $n=3$ and $k=1$, and we have generators 
$$ \de_{ 0, \ 1} = \ga_1 \ga_0^{-1},  \ \de_{ 1, \ 1} = \ga_0 \ga_1 \ga_0^{-2} , \ \de_{ 2, \ 1}  = \ga_0^2\ga_1,$$
satisfying 
$$\de_{ 0, \ 1}  \de_{ 1, \ 1}  \de_{ 2, \ 1}  = 1, \  \de_{ 1, \ 1}  \de_{ 0, \ 1}  \de_{ 2, \ 1}  = 1.$$ 

Eliminating $\de_{ 2, \ 1} $, we get generators $\de_{ 0, \ 1}  \de_{ 1, \ 1}$ with relation 
$$ \de_{ 0, \ 1}  \de_{ 1, \ 1} = \de_{ 1, \ 1}  \de_{ 0, \ 1} ,$$
as fully expected.

Consider now the example of the curve $C$ of affine equation
$$ y^3 = x (x^4 -1).$$

Here, $n=3$ and $k=4$, and we have generators 
$$\de_{ 0, \ j} , \  \de_{ 1, \ j} , \  \de_{ 2, \ j} , \ j=1,2,3, 4, $$
satisfying relations 
$$\de_{ 0, \ j}  \de_{ 1, \ j}  \de_{ 2, \ j} = 1, \ $$
$$\de_{ 1, \ 1}  \de_{ 1, \ 2}  \de_{ 1, \ 3}  \de_{ 1, \ 4} \de_{ 2, \ 1}  \dots  \de_{ 2, \ 4} \de_{ 0, \ 1}  \dots  \de_{ 0, \ 4} = 1. \ $$

Set $$\de_{ 1, \ j} = : a_j , \ \de_{ 2, \ j} = : b_j.$$

Then we have $8$ generators satisfying the relation
$$ a_1 a_2 a_3 a_4 b_1 b_2 b_3 b_4 b_1^{-1}  a_1^{-1} b_2^{-1}   a_2^{-1}  b_3^{-1}  a_3 ^{-1}  b_4^{-1}  a_4 ^{-1} = 1.$$ 

Using the rule we just described, we find {\verde (verify once more)} :

$$a_i a_j = a_i b_j = b_i b_j = 1, \ 1 \leq i <  j \leq 4,$$
$$b_1 a_j = 0, j=1, 2,3,4, \ b_2 a_j =0, j=2, 3,4 , \ b_3 a_j = 0,j=3,4 , \ a_4 b_4= 0 .$$
\end{ex}

\begin{remark}
Together with Jong Hae Keum, Matthew Stover and Domingo Toledo, we proved that the Jacobian $J(C)$ of the above curve
$C$ is isomorphic to $E^4$, the product of four copies of the Fermat elliptic curve; and we also disovered  other curves
whose Jacobian is a product (or is isogenous to a product) of elliptic curves.

For the above curve $C$,  however, we  found later that the same example had been described by Ryo Nakajima in \cite{nakajima}.
\end{remark}

{\bf Acknowledgements:} I would like to thank Andreas Demleitner, Jong Hae Keum, Matthew Stover and Domingo Toledo 
for useful conversations.

\end{document}